\documentclass[11pt, oneside]{amsart}
\usepackage[english]{babel}
\usepackage{hyperref}
\usepackage{cleveref}
\usepackage{amssymb, amsfonts, amsmath}
\usepackage{comment} 
\usepackage{mathtools}
\usepackage{todonotes}
\usepackage{color}
\usepackage{wrapfig}
\usepackage{tikz}
\usepackage{tikz-cd}
\usepackage{comment}
\usepackage{mathrsfs}
\usepackage{tabularx, hyperref}
\usepackage{amssymb} \usepackage{amsfonts} \usepackage{amsmath}
\usepackage{amsthm} \usepackage{epsfig, subfig}
\usepackage{ amscd, amsxtra, latexsym}
\usepackage{epsfig,  graphicx, psfrag}
\usepackage[all]{xy}
\usepackage{caption}
\usepackage{enumerate}
\usepackage{color}
\DeclareMathOperator{\vol}{Vol}

\newcommand{\str}{{{\mathrm{str}}}}

\newcommand{\R} {\ensuremath {\mathbb{R}}}

\newcommand{\norm}[1]{{\left\|#1\right\|}}

\newcommand*{\ucM}{{\widetilde{M}}}

\newcommand\restr[2]{{
\left.\kern-\nulldelimiterspace 
#1 
\vphantom{\big|} 
\right|_{#2} 
}}

\newcommand\rrestr[2]{{
\left.\kern-\nulldelimiterspace 
\left.\kern-\nulldelimiterspace 
#1 
\vphantom{\big|} 
\right|\hspace{-2.4pt} 
\right|_{#2} 
}}

\renewcommand{\phi}{\varphi}
\renewcommand{\colon }{\,:}

\newcommand*{\Simpl}{\mathrm{Simpl}}
\newcommand*{\STR}{\mathrm{str}}

\newtheorem{lemma}{Lemma}[section]
\newtheorem{thm}[lemma]{Theorem}
\newtheorem{prop}[lemma]{Proposition}

\newtheorem*{prop*}{Proposition}
\newtheorem{prop_intro}{Proposition}

\newtheorem{quest_intro}[prop_intro]{Question}
\newtheorem{thm_intro}[prop_intro]{Theorem}

\newtheorem{cor_intro}[prop_intro]{Corollary}

\theoremstyle{definition}

\newtheorem{defn}[lemma]{Definition}

\theoremstyle{remark}
\newtheorem{rmk}[lemma]{Remark}

\newtheoremstyle{citing}
{3pt}
{3pt}
{\itshape}
{}
{\bfseries}
{.}
{.5em}
{\thmnote{#3}}
\theoremstyle{citing}

\begin{document}

\title{On the cup product of De Rham classes in bounded cohomology}

\author[Roberto Frigerio]{Roberto Frigerio}
\address{Dipartimento di Matematica, Universit\`a di Pisa, Italy}
\email{roberto.frigerio@unipi.it}

\author[Francesco Milizia]{Francesco Milizia}
\address{Dipartimento di Matematica, Universit\`a di Bologna, Italy}
\email{francesco.milizia@unibo.it}



\begin{abstract}
On a negatively curved closed manifold, there exists a well-defined map $\Psi^\bullet$
associating to every closed differential form a bounded 
cohomology class via integration over straight simplices. Classes in the image of this map, which, a priori, depend on the fixed family of straight simplices, are usually called \emph{De Rham} classes, and constitute an interesting subspace of bounded cohomology.

In this paper we prove that, in sufficiently high degrees, $\Psi^\bullet$ is a homomorphism of algebras, i.e., it sends the wedge product of closed differential forms to the cup product of the associated bounded cohomology classes.
The degree in which $\Psi^\bullet$ starts to preserve products depends on 
 the  boundedness of Jacobians of straight simplices. For the barycentric straightening introduced by Besson, Courtois and Gallot, this happens for degrees $\ge 3$.

As a corollary, the cup product of two De Rham classes  vanishes, provided that its degree exceeds the dimension of the manifold
(and the degrees of both classes are $\geq 3$).
This result complements vanishing results for the cup product of De Rham classes due to Marasco and to Battista et al.
\end{abstract}

\maketitle

\section{Introduction}

Let $M$ be a closed negatively curved Riemannian $n$-manifold, $n\geq 2$.
It is known that
 the bounded cohomology $H^k_b(M)$ of $M$ with real coefficients is infinite dimensional in degrees $2$ and $3$, 
and
every standard cohomology class with real coefficients in degree at least 2 admits a bounded representative. 
However, the bounded cohomology module $H^k_b(M)$
may  still be quite mysterious when $k>n$ and $k>3$: for example, for no closed hyperbolic $3$-manifold $M$  is it known whether $H^k_b(M)$ vanishes or not for any $k>3$.

A natural question is whether non-trivial bounded cohomology classes in degree $k\geq 4$ may be constructed by taking the cup product of lower degree classes. In this paper
we provide  vanishing results for the cup product of bounded cohomology classes arising from integration of differential forms, complementing some theorems proved 
in~\cite{Marasco},~\cite{Battista} and~\cite{BuFri}, where the cup product of \emph{degree--2} differential forms with generic bounded cohomology classes is considered.

Thanks to the negative curvature of $M$, it is possible to define a \emph{straightening} of singular simplices in $M$. 
Several forms of straightening exist: we refer the reader to Subsections~\ref{str1:sub} and~\ref{str2:sub} for a detailed discussion of this notion and of geometric properties of straightenings. 
For reasons that will be thoroughly discussed below, in our Theorem~\ref{main:thm} and Corollary~\ref{main:cor}
we consider the \emph{barycentric} straightening originally due to Besson, Courtois and Gallot~\cite{BCG0,BCG} and studied by a number of authors~\cite{LFS,LFW,connell_positivity_2019,CW2,CW}, rather than the geodesic straightening
described, e.g., in~\cite{inoue_gromov_1982}.

Let $\Omega^k(M)$ be the space of degree--$k$ differential forms on $M$, and denote by $C\Omega^k(M)$ the subspace of closed forms. 
For every $k\in\mathbb{N}$, every singular simplex $\sigma\colon \Delta^k\to M$ has an associated (baricentrically) \emph{straightened} simplex $\str_k(\sigma)\colon \Delta^k\to M$.
Moreover, under the assumption $k\geq 3$,  the $k$-dimensional volume of $\str_k(\sigma)$ is uniformly bounded above by a constant only depending on the curvature of $M$.
Hence,
integration over straight simplices yields a well-defined chain map
$$
\Psi^{k} \colon C\Omega^{k}(M)\to H^{k}_b (M)
$$
such that, for every $\omega\in C\Omega^k(M)$, $k\geq 3$, a representative of the class $\Psi^k(\omega)$ is given by the bounded singular cochain such that
$$
\sigma \mapsto \int_{\str_k(\sigma)} \omega
$$
for every singular simplex $\sigma\colon \Delta^k\to M$. 
(An even stronger result holds true when working with the geodesic straightening: in that case, one needs to assume only $k\geq 2$ rather than $k\geq 3$.)
Classes lying in the image of $\Psi^{\bullet}$ are called \emph{De Rham classes} (note that this notion depends in general on the fixed straightening on $M$). 

Barge and Ghys proved in~\cite{bargeghys} the rather surprising result that, when $M$ is a closed negatively curved surface and $k=2$, 
integration along (geodesically) straight simplices provides an \emph{injective} map
$C\Omega^2(M)\to H^2_b(M)$; in particular, the second bounded cohomology of such surfaces contains
an infinite dimensional space, whose classes are explicitly described by
integration of forms. This result  has been recently generalized in~\cite{Battista} to the case when $M$ is a closed negatively curved Riemannian manifold 
of an arbitrary dimension, and in~\cite{DAFH} to many open hyperbolic manifolds. Thus, at least in degree 2, closed differential forms may be exploited to define a very wide family of interesting bounded 
cohomology classes.

We prove here the following:

\begin{thm_intro}\label{main:thm}
Let $M$ be a  closed negatively curved Riemannian manifold. Then the integration map along baricentrically straightened simplices 
$$
\Psi^\bullet \colon C\Omega^\bullet(M)\to H^\bullet_b (M)
$$
is a homomorphism of algebras in sufficiently high degrees, i.e.,
$$
\Psi^{p+q}(\omega_1\wedge\omega_2)=\Psi^p(\omega_1)\cup \Psi^q(\omega_2)
$$
for every $p\geq 3$, $q\geq 3$, and every $\omega_1\in C\Omega^p(M)$,  $\omega_2\in C\Omega^q(M)$.
\end{thm_intro}

\begin{cor_intro}\label{main:cor}
Let $M$ be a closed $n$-dimensional negatively curved manifold, and let $\beta_1\in H^p_b(M)$, $\beta_2\in H^q_b(M)$ be De Rham classes associated
to the barycentric straightening of $M$. If $p\geq 3$, $q\geq 3$ and $p+q>n$, then 
\[
\beta_1\cup \beta_2=0 \ \in \ H^{p+q}_b(M)\ .
\]
\end{cor_intro}
\begin{proof}
By definition of De Rham class, we have $\beta_1=\Psi^p(\omega_1)$, $\beta_2=\Psi^q(\omega_2)$ for some
$\omega_1\in C\Omega^p(M)$, $\omega_2\in C\Omega^q(M)$. Since $p+q>n$, we have trivially
$\omega_1\wedge \omega_2=0$, hence $\beta_1\cup \beta_2=0$ by Theorem~\ref{main:thm}.
\end{proof}	

There exist by now quite a number of results on the vanishing of the cup product in bounded cohomology. 
Marasco proved in~\cite{Marasco} that, for a closed negatively curved manifold of any dimension, the cup product of any degree--2 \emph{exact} De Rham class (defined via the geodesic straightening) with any other bounded cohomology class vanishes. Very recently, Bucher and the first author extended this result to the cup product
of the bounded volume form with any other bounded cohomology class~\cite{BuFri}. These results imply that, for closed negatively curved surfaces,
De Rham classes defined via the geodesic straightening belong to the annihilator of the cup product. 
Probably driven by the long-standing problem whether the bounded cohomology of free groups vanishes in degree $\geq 4$, a lot of attention has been devoted to the case of the free
group, for which it has been shown that cup products of bounded cohomology classes vanish in a wide variety of cases
(see~\cite{Heuer, BucherMonod,Bucher2}).

\subsection{Straightenings in negative curvature}\label{str1:sub}

We believe that Theorem~\ref{main:thm} (and then Corollary~\ref{main:cor}) should be true without the restriction $p,q \ge 3$ on the degrees of the closed forms involved. Surprisingly enough,
what prevents us from proving this more general result is the lack of boundedness properties of geometric straightenings in negative curvature. 
Roughly speaking, a straightening is a chain map (chain homotopic to the identity) $C_\bullet(M)\to C_\bullet(M)$ associating to every simplex $\sigma$ with values in $M$ a \emph{straight} simplex with better regularity properties (see Subsection~\ref{str2:sub} for a precise definition). When exploiting straightenings in bounded cohomology (or for the study of simplicial volumes, as for example in Gromov--Thurston's computation of the simplicial volume of hyperbolic manifolds~\cite{thurston_geometry_1979}), a key fact is that the volume of straight simplices (of dimension at least 2) should be uniformly bounded:  when this is the case, we say that the straightening \emph{has bounded volumes} in degree $\geq 2$ (see Subsection~\ref{boundedness:sub}).
Such a straightening allows to define the map $\Psi^{k} \colon C\Omega^{k}(M)\to H^{k}_b (M)$ for every $k\geq 2$. However, 
in order to prove  Theorem~\ref{main:thm}, we need a stronger property: namely, we need a control on the norm of the differential of any straight simplex at any of its point 
(see again Subsection~\ref{boundedness:sub} for the precise definition of \emph{having bounded Jacobians} in degree $\geq d$).

As already mentioned, there are (at least) two straightenings which have been repeatedly considered 
in the literature for negatively curved manifolds: the geodesic straightening and the more sophisticated
barycentric straightening due to Besson, Courtois and Gallot. In the case of constant negative curvature, i.e., for hyperbolic manifolds, the geodesic straightening and
the barycentric straightening coincide up to reparametrization, hence they define the same map $\Psi^k\colon C\Omega^k(M)\to H^k_b(M)$, $k\geq 2$. In the general case, however, they could in principle give rise to different maps $\Psi^k$, $k\geq 3$ (see Questions~\ref{bounded2:quest} and~\ref{distint:quest} below).

 It is well known that the geodesic straightening has bounded volumes in degree $\geq 2$~\cite{inoue_gromov_1982}; however, even in the hyperbolic case, it has not
bounded Jacobians in any degree (see Subsection~\ref{geodesic:subsec}). On the contrary, it is known that the barycentric straightening has bounded Jacobians
in top degree $n=\dim M$, when $n\geq 3$. In Subsection~\ref{bounded:jac:sub} we extend this result to all degrees $\geq 3$, by proving the following result
(probably known to experts): 

\begin{thm_intro}\label{bounded:jac:thm}
Let $M$ be a closed negatively curved Riemannian manifold. Then, the barycentric straightening on $M$ has bounded Jacobians in degrees $\geq 3$.
\end{thm_intro}

Theorem~\ref{bounded:jac:thm} plays an important role in our proof of Theorem~\ref{main:thm}.
 We also prove in Subsection~\ref{unbounded:sub} that the barycentric straightening has \emph{not} bounded Jacobians in degree 2, even in the constant curvature case:
 
 \begin{thm_intro}\label{nonbounded:jac:thm}
 The barycentric straightening on the hyperbolic plane $\mathbb{H}^2$ does not have bounded Jacobians in degree 2.
 \end{thm_intro}

Observe that, while the property of having bounded volumes is not sensitive to reparametrizations, the property of having bounded Jacobians strictly depends on the parametrization
of straight simplices. Therefore, it makes sense to ask the following:

\begin{quest_intro}
Is it possible to modify the geodesic straightening (or the barycentric straightening) by taking new parametrizations of its straight simplices in order to obtain a new straightening with bounded Jacobians
in degrees $\geq 2$?
\end{quest_intro} 

More in general:

\begin{quest_intro}
Let $M$ be a closed negatively curved manifold. Does there exist a straightening on $M$ having bounded Jacobians in degrees $\ge 2$?
\end{quest_intro} 

A positive answer to any of these questions would allow us (aside from some technicalities concerning the compatibility of the straightening with the cross product, see Subsection \ref{cross:sec})
to prove Theorem~\ref{main:thm} without any restriction on the degrees of the differential forms involved
(recall that $H^1_b(M)=0$, hence it only makes sense to consider forms of degree $\geq 2$). 

As mentioned above, in the hyperbolic case the barycentric straightening coincides, up to reparametrization, with the geodesic one, hence it has bounded volumes in degree $\geq 2$. It is not clear, however, whether this fact also holds in variable curvature:

\begin{quest_intro}\label{bounded2:quest}
Has the barycentric straightening bounded volumes also in degree 2?
\end{quest_intro}

More in general, one may wonder whether the geodesic and the barycentric straightening associate to a closed differential form the same bounded cohomology class:

\begin{quest_intro}\label{distint:quest} 
Let $\Psi^k\colon C\Omega^k(M)\to H^k_b(M)$ be the map of our Theorem~\ref{main:thm}, defined via
the barycentric straightening.
Let $\Psi^k_{\text{gd}}\colon C\Omega^k(M)\to H^k_b(M)$  be the analogous map obtained
via the geodesic straightening. Are $\Psi^k$ and
$\Psi_{\text{gd}}^k$ equal for every $k\geq 3$ (or, should Question~\ref{bounded2:quest} have a positive answer, for every $k\geq 2$)?
\end{quest_intro}

The question of the dependence of the integration map $\Psi^k$ on the chosen straightening is closely related to \cite[Question 8]{bbcm_de_rham}, where the authors ask to what extent the map depends on the fixed \emph{metric} on the manifold. In fact, when different metrics are considered, one obtains different geodesic (or barycentric) straightenings, and it is not clear how this affects the integration map $\Psi^k$ and its image in bounded cohomology.

\subsection*{Acknowledgements} The authors thank Chris Connell, Stefano Francaviglia, Jean--Fran\c{c}ois Lafont and Shi Wang for useful conversations. The authors are partially supported by INdAM through GNSAGA.
Francesco Milizia is supported by the ERC ``Definable Algebraic Topology'' DAT - Grant Agreement no. 101077154.

\section{Preliminaries}
Let $M$ be a topological space (in fact, we will be interested only in the case when $M$ is a manifold).
For every $k \in \mathbb{N}$, we denote by $\Delta^k$ the standard $k$-simplex, which is the convex hull of the canonical basis of $\mathbb{R}^{k+1}$, and by
$\Simpl_k(M)$ the set of singular $k$-dimensional simplices of $M$, i.e., the set whose elements are continuous maps from $\Delta^k$ to $M$.
We denote by $C_\bullet (M)$ the singular chain complex of $M$ with real coefficients, which is given, in degree $k$, by the real vector space
with basis $\Simpl_k(M)$.
Henceforth, we implicitly understand that (co)chains and (co)homology groups are always with real coefficients.

\subsection{Bounded cohomology}

The \emph{bounded cohomology} $H^\bullet_b(M)$ of $M$ is the cohomology of the complex
\[
	0\rightarrow C^0_b(M)\rightarrow C^1_b(M)\rightarrow C^2_b(M)\rightarrow\cdots \,,
\]
where $C^n_{b}(M)$ denotes the space of bounded singular $n$-cochains of $M$, the differential maps are the restrictions of the boundary maps of the usual singular cochain complex $C^\bullet(M)$ and a cochain $\varphi \in C^n(M)$ is \emph{bounded} if
\[
	\lVert \varphi \rVert_\infty =
	\sup \bigl\{ | c(\sigma) |, \; \sigma \in \Simpl_n(M)
	\bigr\} < \infty .
\]

\subsection{Straightenings}\label{str2:sub}

We first provide an abstract definition of straightening for simplices which, in particular, will encompass both the geodesic and the barycentric straightening.

\begin{defn}\label{defn:straightening}
	Let $M$ be a smooth manifold.
	A straightening $\STR_\bullet$ on $M$ is a sequence of maps
	\[ \STR_k \colon \Simpl_k(M) \to \Simpl_k(M), \]
	where $k$ varies in $\mathbb{N}$, with the following properties:
	\begin{enumerate}
		\item For every $k$-simplex $\sigma$, its straightening $\STR_k(\sigma)\colon \Delta^k \to M$ is a function of class $C^1$;
		\item The $i$-th face of the straightening of $\sigma \in \Simpl_k(M)$ is equal to the straightening of the $i$-th face of $\sigma$, for every $i \in \{0,\dots,k\}$;
		\item There is a family of homotopies $h_\sigma \colon \Delta^k \times [0,1] \to M$ connecting $\sigma$ and $\STR_k(\sigma)$, where $k$ varies in $\mathbb{N}$ and $\sigma$ varies in $\Simpl_k(M)$; this family of homotopies is compatible with restriction to faces, i.e., the homotopy associated to a  face of $\sigma$ 
		is equal to the restriction of $h_\sigma$ to the same face.
	\end{enumerate}
\end{defn}

The linear extension of a straightening to $C_\bullet(M)$ provides  a chain map $\STR_\bullet\colon C_\bullet(M) \to C_\bullet(M)$, which is homotopic to the identity.

We will consider, in particular, straightenings on manifolds which have negative sectional curvature, and are defined using geometric methods; this explains why we speak of ``straightening'' instead of ``regularization'', even though geometry does not appear in \Cref{defn:straightening}.

\begin{rmk}[Comparison with definitions from other papers]
	Our definition of straightening coincides with the one considered by Lafont-Wang in \cite{LFW} (where, however, the authors focus on manifolds which are symmetric spaces of noncompact type).
	The definition given by Lafont-Schmidt and Connell-Wang in \cite{LFS,CW} is only slightly different. 	Instead, Gromov, in \cite{Gromov82}, assumes that the straightening of a simplex $\sigma$ should depend only on the homotopy classes (relative to the endpoints) of the edges of $\sigma$ --- in other words, only on the vertices of a lift of $\sigma$ on the universal cover of the manifold; this is in fact true for both the geodesic and the barycentric straightenings.
\end{rmk}

\begin{rmk}
	Our definition of straightening does \emph{not} require $\str_0$ to be the identity. Thus, in general, a straightened simplex could have vertices which are distinct from the ones of the original simplex.
	The geodesic straightening is special in that it fixes vertices (this has some useful consequences: for example, in that case the straightening is idempotent, i.e., it doesn't affect straight simplices); for the barycentric straightening, this is not  true in general (but still holds in specific cases, e.g., for hyperbolic manifolds). 
\end{rmk}

	It readily follows from the properties listed in \Cref{defn:straightening} that, for any connected manifold $M$,  straightenings on $M$ 
	correspond to	straightenings on the universal covering of $M$ that are equivariant (and equivariantly homotopic to the identity, as described in condition (3) above) 
	with respect to deck transformations.

\subsection{Boundedness properties of straightenings}\label{boundedness:sub}

Let $M$ be a Riemannian manifold and $\STR_\bullet$ be a straightening on $M$.
\begin{defn}\label{defn:bounded_areas}
	Let $d \in \mathbb{N}$.
	We say that $\STR_\bullet$ has \emph{bounded volumes} in degree $d$ if there exists a constant $C_d \in [0,+\infty)$ such that for any $\sigma \in \Simpl_d(M)$ its straightening $\STR_d(\sigma)$ has $d$-volume not greater than $C_d$.
\end{defn}

For every $p\in\Delta^k$, the \emph{norm of the $d$-Jacobian} of a $C^1$ singular simplex $\sigma\colon \Delta^k\to M$ is the supremum of the absolute values
of $\|d\sigma_p (v_1)\wedge\ldots \wedge d\sigma_p(v_k)\|$, as $(v_1,\ldots,v_k)$ varies in the set of orthonormal $k$-frames
in the tangent spaces $T_p \Delta^k$, $p\in\Delta^k$, and the norm $\|\cdot \|$ on $\Lambda^k (T_{\sigma(p)} M)$ is induced by the Riemannian metric of $M$.

\begin{defn}
	We say that $\STR_\bullet$ has \emph{bounded Jacobians} in degree $d$ if there exists a constant $L_d \in [0,+\infty)$ such that, for every $k \ge d$ and every $\sigma \in \Simpl_k(M)$, the norm of the $d$-Jacobian of $\STR_k(\sigma)$ is at most $L_d$.  
\end{defn}

If $\STR_\bullet$ has bounded Jacobians in degree $d$, then it has bounded volumes in degree $d$, since the $d$-volume of a $d$-dimensional simplex is obtained by integrating its $d$-Jacobian on the standard simplex $\Delta^d$.


\subsection{Integration along straight simplices}
	Let $M$ be a smooth manifold and $\STR_\bullet$ be a straightening on $M$.
	We define the integration map
	\[ I_{\STR}^\bullet \colon \Omega^\bullet(M) \to C^\bullet(M) \]
	by setting, for every $\omega \in \Omega^k(M)$ and $\sigma \in \Simpl_k(M)$,
	\[ I_{\STR}^k(\omega)(\sigma) = \int_{\STR_k(\sigma)} \omega\, ,
	\]
	and extending linearly $I_{\STR}^k(\omega)$ on $C_k(M)$ for every $k\in\mathbb{N}$.
Stokes' Theorem (together with the fact that the straightening commutes with taking faces) readily implies that $I_{\STR}^\bullet$ is a chain map. Moreover, it is easily seen that $I_{\STR}^\bullet$
induces the classical De Rham isomorphism between De Rham cohomology and standard singular cohomology.

Let us now suppose that $M$ be compact. Then, every differential form on $M$ is bounded. Hence,  if $\STR_\bullet$ has bounded volumes in degree $k$, then $I_{\STR}^k$ factors through $C_b^k(M)$, thus defining a map $\Omega^k(M) \to C_b^k(M)$, which will still be denoted by
$I^{\STR}_k$. 

\begin{defn}\label{def:psi}
	Let $\STR_\bullet$ be a straightening on $M$ with bounded volumes in degree $k$.
	Let $C\Omega^k(M)$ be the vector space of closed $k$-forms on $M$.
	We denote by
	\[ \Psi^{k}_\STR \colon C\Omega^k(M) \to H_b^k(M) \]
	the map defined as $\Psi_{\STR}^k(\omega) = [I_{\STR}^k(\omega)]$, for every $\omega \in C\Omega^k(M)$.
\end{defn}

\section{Geometric straightenings}
In this section we discuss the boundedness properties of the \emph{geodesic} and of the \emph{barycentric} straightenings. Throughout 
the section, we denote by $M$ a closed negatively curved Riemannian manifold with Riemannian universal covering
$\widetilde{M}\to M$. We also denote by $\Gamma<\text{Isom}(\widetilde{M})$  the 
group of the covering automorphisms of $\widetilde{M}$, so that $M=\widetilde{M}/\Gamma$.

\subsection{The geodesic straightening}\label{geodesic:subsec}

Since $M$ is compact, hence complete, the universal covering $\widetilde{M}$
is itself complete. Being also negatively curved,  it is uniquely geodesic, and this allows us to define a straightening on $\widetilde{M}$
as follows.

Let $k\in\mathbb{N}$ and let $x_1,\dots,x_{k+1}$ be points in $\widetilde{M}$. The \emph{straight} simplex $[x_1,\dots,x_{k+1}]\in  \Simpl_k(\widetilde{M})$
with vertices $x_1,\dots,x_{k+1}$ is defined as follows: if $k=0$, then $[x_1]$ is the constant $0$-simplex with image $x_1$; if straight simplices
have been defined for every dimension $h < k$, then $[x_1,\dots,x_{k+1}]\in  \Simpl_{k}(\widetilde{M})$ is determined by the following condition: 
for every $z\in \Delta^{k-1}\subseteq \Delta^{k}$ (here, $\Delta^{k-1}$ is identified with the face of $\Delta^{k}$ opposite to the last vertex $e_{k+1}$ of $\Delta^{k}$), the restriction of $[x_1,\dots,x_{k+1}]$ to the segment with endpoints
$z,e_{k+1}$ is the constant-speed parametrization of the geodesic joining $[x_1,\dots,x_k](z)$ to $x_{k+1}$. The fact that $[x_1,\dots,x_{k+1}]$
is indeed of class $C^1$ is proved in~\cite[Proposition 2.4]{loehsauer}.

If $\sigma\colon \Delta^k\to M$ is any singular simplex, we then define $\str^{\rm gd}_k(\sigma)$ as follows: we lift $\sigma$ to $\widetilde{\sigma}\colon \Delta^k\to \widetilde{M}$,
we set $x_i=\widetilde{\sigma}(e_i)$ for $i=1,\dots, k+1$, and we finally set $\str^{\rm gd}_k(\sigma)=\pi\circ [x_1,\dots,x_{k+1}]$, where $\pi\colon \widetilde{M}\to M$ is the covering projection. 
 
It is proved in~\cite{inoue_gromov_1982} that the geodesic straightening has bounded volumes in degree $d\geq 2$. Unfortunately, it has not bounded Jacobians in any degree. Indeed, if $\dim M=n$ and $x_{k+1}$ is any point in $\widetilde{M}$, then for every $1\leq k\leq n$ and every $L>0$ we may choose an orthonormal $k$-frame $(v_1,\dots,v_{k})$ in the tangent space $T_{x_{k+1}} \widetilde{M}$ and set $x_i=\gamma_i(L)$, where $\gamma_i$ the geodesic of $\widetilde{M}$ starting at $x_{k+1}$ with initial speed $v_i$. 
Then, the norm of the $k$-Jacobian of the straight simplex $[x_1,\dots, x_{k+1}]$ at $e_{k+1}$ is equal to $L^k$. Due to the arbitrariness of $L$, this implies that the geodesic straightening
has not bounded Jacobians. This is the main reason why in this paper we mainly consider the barycentric straightening,
which was first introduced by Besson, Courtois and Gallot.

\subsection{The barycentric straightening}\label{bary:sub}

Let $\partial_\infty \ucM$ be the boundary at infinity of $\ucM$, defined, e.g., as the set of geodesic rays in $\ucM$ modulo the equivalence relation of travelling at bounded distance.
Recall that $\partial_\infty \ucM$ has a topology (inherited, e.g., from the unit tangent sphere at any point of $\ucM$), which makes it homeomorphic to a sphere. In order to define the barycentric straightening, we first need to introduce 
the \emph{Patterson-Sullivan measures} of $\ucM$, which are a family of Radon probability measures $\{\nu_x\}_{x \in \ucM}$ on $\partial_\infty \ucM$, parametrized by points of $\ucM$, satisfying the following properties:
\begin{enumerate}
	\item \label{ps-invariance} $\gamma_* \nu_x = \nu_{\gamma x}$ for every $\gamma \in \Gamma$ and $x \in \ucM$;
	\item The measures $\nu_x$ are all absolutely continuous with respect to each other, and their Radon-Nikodym derivatives are \[\frac{d\nu_y}{d\nu_x}(\xi) = \exp(-h_\ucM \cdot b_{\xi,x}(y)),\]
	for every $x,y \in \ucM$.
\end{enumerate}
In the expression above, $h_\ucM$ is the volume entropy of $\ucM$, which, since $\widetilde{M}$ has negative curvature bounded away from zero, is a positive real number; moreover, $b_{\xi,x}\colon \ucM \to \mathbb{R}$ denotes the Busemann function on $\ucM$ relative to the point at infinity $\xi$ (its gradient, at any point, has unit length and points to the direction opposite to $\xi$) and attaining the value $0$ at $x$. For the existence (and the uniqueness) of 
the Patterson-Sullivan measures $\{\nu_x\}_{x \in \ucM}$ see, e.g., \cite{knieper_asy}.

Conversely, given a measure $\nu$ on $\partial_\infty \ucM$, its barycenter in $\ucM$ is obtained by considering the function
\[ \mathcal{B}_{\nu,x_0} (x) = \int_{\partial_\infty \ucM} b_{\xi, x_0}(x) \ d \nu(\xi) \]
and defining $\mathrm{Bar}(\nu) \in \ucM$ as the point where  $\mathcal{B}_{\nu,x_0}$ attains its minimum.
Here, $x_0 \in \ucM$ is an uninfluential basepoint, whose choice only changes $\mathcal{B}_{\nu,x_0}$ by adding a constant.
Since $M$ is compact, the Patterson-Sullivan measures $\nu_x$ have full support \cite[Lemma 4.1]{knieper_asy}, 
and this implies in turn that 
the function $\mathcal{B}_{\nu,x_0}$ is strictly convex whenever $\nu$ is a positive linear combination of Patterson-Sullivan measures
(this follows from \Cref{prop:Hess} below, see \cite[Lemma 3.1]{connell_positivity_2019}).
Therefore, in this case the barycenter $\mathrm{Bar}(\nu)$ is well defined.

\begin{defn}[cf. \cite{connell_positivity_2019}]
	The barycentric straightening on $M$ is defined as follows.
	Given $\sigma \in \Simpl_k(M)$, take a lift of $\sigma$ to the universal covering $\ucM$, and let $x_1, \dots, x_{k+1} \in \ucM$ be its vertices.
	Then, $\STR_k(\sigma)$ is defined by sending any $a =(a_1, \dots, a_{k+1}) \in \Delta^k$ to the projection on $M$ of the point
	\[ \mathrm{Bar}(a_1^2 \nu_{x_1} + \dots + a_{k+1}^2 \nu_{x_{k+1}}) \in \ucM, \]
	where the $\nu_{x_i}$ are the Patterson-Sullivan probability measures associated to the vertices of the lifted simplex.
	
	Since the construction performed on $\ucM$ is equivariant with respect to deck transformations, the result does not depend on the chosen lift of $\sigma$, and the procedure is a straightening according to \Cref{defn:straightening}; 
	the $C^1$ regularity of $\STR_k(\sigma)$ follows from the implicit function theorem applied to equation \eqref{eq:stationary} below, using the fact that Busemann functions are $C^2$ and have good convexity properties (see \Cref{prop:Hess}).
\end{defn}

\subsection{The barycentric straightening has bounded Jacobians}\label{bounded:jac:sub}

This subsection is devoted to the proof of Theorem~\ref{bounded:jac:thm}, which states
 that the barycentric straightening has bounded Jacobians in degrees $\ge 3$.
Since our argument follows very closely the one in \cite{connell_positivity_2019}, we skip some details, but aim at carefully describing the strategy and the key points 
of the proof.

Let $k \in \mathbb{N}$, fix $k+1$ points on $\ucM$ (which should be thought as the vertices of a lifted singular simplex), and call them $x_1, \dots, x_{k+1}$.
Let $\nu_1, \dots, \nu_{k+1}$ be the associated Patterson-Sullivan probability measures.
Fix also an (uninfluential) basepoint $x_0 \in \ucM$.

By definition, the barycenter method produces a map $s\colon\Delta^k\to \ucM$, the ``straightened simplex'', which sends any $a = (a_1, \dots, a_{k+1}) \in \Delta^k \subset \mathbb{R}^{k+1}$ to the (unique) point $x \in \ucM$ that minimizes the quantity
\[ \int_{\partial_\infty \ucM} b_{\xi,x_0}(x) \sum_{i=1}^{k+1} a_i^2 d\nu_i(\xi). \]
By differentiating this quantity with respect to $x$ and evaluating at $s(a)$, one obtains, for every $a \in \Delta^k$:
\begin{equation}\label{eq:stationary}
	0 \ =\  \int_{\partial_\infty \ucM} db_{\xi,x_0}(s(a)) \sum_{i=1}^{k+1} a_i^2 d\nu_i(\xi)\ \in\ (T_{s(a)}\ucM)^*.
\end{equation}
Differentiating now with respect to $a$, one then obtains that, for every $a \in \Delta^k$, $u = (u_1,\dots,u_k) \in T_a\Delta^k$ and $v \in T_{s(a)}\ucM$,
\begin{align}\label{eq:derived_relation}
	0 &=\ \sum_{i=1}^{k+1} 2a_iu_i\int_{\partial_\infty \ucM} db_{\xi,x_0}(v) d\nu_i(\xi) \\
	&\quad + \int_{\partial_\infty \ucM} \mathrm{Hess}\ b_{\xi,x_0}(ds(u), v) \sum_{i=1}^{k+1} a_i^2d\nu_i(\xi). \nonumber
\end{align}

Let $V$ be a $d$-dimensional linear subspace of $T_a\Delta^k$, with $3 \le d \le k$.
We wish to prove that there is an upper bound, which depends only on the geometry of $\ucM$, for the (absolute value of) the determinant of $ds_{\restriction V} \colon V \to ds(V)$.
Of course, we may suppose that the tangent map $ds$ is injective on $V$ (otherwise, there is nothing to prove).
Consider the following two symmetric positive semidefinite bilinear forms on $ds(V)$: 
\begin{align*}
	K_a (w,v) &=  \int_{\partial_\infty \ucM} \mathrm{Hess}\ b_{\xi,x_0}(w, v) \sum_{i=1}^{k+1} a_i^2d\nu_i(\xi),\\
	H_a(w,v) &= \int_{\partial_\infty \ucM} db_{\xi,x_0}(w) \cdot db_{\xi,x_0}(v) \sum_{i=1}^{k+1} a_i^2d\nu_i(\xi).
\end{align*}

It follows from a manipulation of \eqref{eq:derived_relation}, see \cite[p.~1016]{connell_positivity_2019} for details, that
\begin{equation}\label{eq:KH}
	\lvert K_a(ds(u), v) \rvert \le 2 \norm{u} \cdot \sqrt{H_a(v,v)}
\end{equation}
for every $a \in \Delta^k$, $u \in T_a\Delta^k$ and $v \in ds(T_a\Delta^k)$. 
Moreover, $K_a$ is positive definite (because of \Cref{prop:Hess} below).
A crucial consequence of \eqref{eq:KH} is the following estimate:
\begin{equation}\label{eq:J_estimate}
	\lvert \det (ds_{\restriction V}) \rvert \le 2^d\ \frac{\sqrt{\det H_a}}{\det K_a}.
\end{equation}
The strategy is now to show that $K_a$ has at most \emph{one} ``small'' eigenvalue, and that $H_a$ has at least \emph{two} comparatively small eigenvalues (while all its eigenvalues are $\le 1$, as it follows from its definition and the basic properties of Busemann functions), so that the the right-hand side of~\eqref{eq:J_estimate} can be controlled.
Actually, the proof shows that $H_a$ has $d-1$ small eigenvalues, so that the conclusion follows when $d \ge 3$, as claimed.

The main ingredients of the proof are the following two propositions, which are simplified versions of statements in \cite{connell_positivity_2019}, adapted to our situation in which  $M$ (and, hence, $\ucM$) have negative curvature bounded away from zero.

\begin{prop}[{\cite[Theorem 5]{connell_positivity_2019}}]\label{prop:Hess}
	Let $\xi \in \partial_\infty \ucM$ and $x \in \ucM$.
	Then, for every $v \in T_x\ucM$ 
	orthogonal to the direction pointing towards $\xi$ (or, equivalently, tangent to the horosphere centered at $\xi$ and passing through $x$), it holds
	\[ \mathrm{Hess}\ b_{\xi,x_0}(v,v) \ge C \cdot \lVert v \rVert^2,\]
	for some $C > 0$ independent of $x,v$ and $\xi$.
\end{prop}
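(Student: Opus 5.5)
Write $-a^2<0$ for an upper bound on the sectional curvature of $\ucM$; it exists because $M$ is compact and negatively curved. I would prove the statement with $C=a$. The approach is to get the inequality first for distance functions to points, where Hessian comparison applies, and then pass to Busemann functions as limits. Fix $\xi$ and let $c\colon[0,\infty)\to\ucM$ be the unit-speed geodesic ray from $x_0$ to $\xi$. Recall that
\[
b_{\xi,x_0}(y)=\lim_{s\to\infty}\bigl(d(y,c(s))-s\bigr),
\]
and that the convergence is locally uniform in $y$. Set $f_s(y)=d(y,c(s))-s$. Then $f_s$ is smooth away from $c(s)$, has $\mathrm{Hess}\,f_s=\mathrm{Hess}\,r_{c(s)}$, and its gradient at $y$ is the unit vector pointing away from $c(s)$.

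\textbf{Step 1 (comparison for distance functions).} Let $p\in\ucM$, $r=d(\cdot,p)$ and $y\neq p$. By the Hessian comparison theorem (equivalently, Rauch comparison applied to Jacobi fields along the geodesic from $p$ to $y$), for every $w\in T_y\ucM$ orthogonal to $\nabla r$ we have
\[
\mathrm{Hess}\,r(w,w)\ \ge\ a\coth\bigl(a\,r(y)\bigr)\,\|w\|^2\ \ge\ a\|w\|^2.
\]
Since $\mathrm{Hess}\,r(\nabla r,\cdot)=0$, splitting a general $w$ into its components along and orthogonal to $\nabla r$ gives
\[
\mathrm{Hess}\,r(w,w)\ \ge\ a\bigl(\|w\|^2-\langle w,\nabla r\rangle^2\bigr)\quad\text{for all } w .
\]

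\textbf{Step 2 (passage to the limit).} This is the step I expect to be the main obstacle, because it needs more than $C^0$ convergence of $f_s$ to $b_{\xi,x_0}$. I would avoid proving $C^2$ convergence and instead use integrated inequalities along geodesics.

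1. Fix a geodesic $\gamma$ through $x=\gamma(0)$ with $\gamma'(0)=v$. For $s$ large, $c(s)$ stays away from $\gamma([-\epsilon,\epsilon])$, so Step 1 gives
\[
\frac{d^2}{dt^2}f_s(\gamma(t))\ \ge\ a\bigl(\|v\|^2-\langle \gamma'(t),\nabla f_s(\gamma(t))\rangle^2\bigr).
\]

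2. As $s\to\infty$, the unit vectors $\nabla f_s$ converge uniformly on compact sets to $\nabla b_{\xi,x_0}$. This holds because the directions from nearby points towards $c(s)$ converge to the directions towards $\xi$, by the continuity of the visual topology recalled in Subsection~\ref{bary:sub}.

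3. The inequality in item 1 is equivalent to the second difference quotients
\[
\frac{g(t+h)+g(t-h)-2g(t)}{h^2},\qquad g=f_s\circ\gamma,
\]
being bounded below by averages of the right-hand side. This formulation passes to uniform limits.

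4. Using that $b_{\xi,x_0}$ is $C^2$ (as recalled in Subsection~\ref{bary:sub}), letting $h\to 0$ gives at $t=0$
\[
\mathrm{Hess}\,b_{\xi,x_0}(v,v)\ \ge\ a\bigl(\|v\|^2-\langle v,\nabla b_{\xi,x_0}(x)\rangle^2\bigr).
\]

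\textbf{Step 3 (conclusion).} The gradient $\nabla b_{\xi,x_0}(x)$ is the unit vector opposite to the direction towards $\xi$. Hence for $v$ orthogonal to that direction, i.e. tangent to the horosphere through $x$, the correction term $\langle v,\nabla b_{\xi,x_0}(x)\rangle^2$ vanishes, and we obtain
\[
\mathrm{Hess}\,b_{\xi,x_0}(v,v)\ \ge\ a\|v\|^2 .
\]
The constant $a$ depends only on the curvature bound, not on $x$, $v$ or $\xi$, so we may take $C=a$.

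\textbf{Alternative for Step 2.} One could instead note that $U(t)=\mathrm{Hess}\,b$ restricted to the horospheres satisfies the Riccati equation $U'+U^2+R=0$ along the geodesics orthogonal to them, and run a comparison with the constant solution $a\,\mathrm{Id}$. In that route the main delicate point is justifying the initial behaviour of $U$ at $t\to-\infty$.
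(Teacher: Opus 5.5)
\textbf{Comparison.} Your proof is correct, and it takes a different route from the paper. The paper gives no argument of its own: it imports \Cref{prop:Hess} as a simplified case of a more general estimate in \cite{connell_positivity_2019}, which is stated there under weaker (nonpositive, Ricci-type) curvature hypotheses. In the strictly negatively curved setting this is the classical Heintze--Im Hof lower bound for the principal curvatures of horospheres.

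You instead give a self-contained derivation in two steps. First, you use the Hessian comparison for $d(\cdot,c(s))$, rewritten as $\mathrm{Hess}\,r(w,w)\ge a\bigl(\|w\|^2-\langle w,\nabla r\rangle^2\bigr)$ so that it holds for every $w$ and not only for $w\perp\nabla r$. Second, you pass to the limit through second difference quotients. This only needs pointwise convergence of $f_s$ and of $\nabla f_s$, not $C^2$ convergence of $f_s$ to $b_{\xi,x_0}$.

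\textbf{What each route buys.} Your route gives an explicit constant, $C=a$, where $-a^2$ is an upper bound for the sectional curvature. This constant is sharp: in real hyperbolic space $\mathrm{Hess}\,b_{\xi,x_0}$ is exactly the orthogonal projection onto the tangent space of the horosphere. It also relies only on facts the paper already takes for granted, namely the $C^2$ regularity of Busemann functions and the description of their gradients. The citation buys a generality that is not needed here. Your main route also sidesteps the delicate behaviour as $t\to-\infty$ that you correctly flag for the Riccati alternative.

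\textbf{A small point on item 2 of Step 2.} You do not need uniform convergence of $\nabla f_s$ there. The integrand in your averaged inequality is bounded by $\|v\|^2$, so dominated convergence only requires pointwise convergence along $\gamma$. Your justification via ``continuity of the visual topology'' is a bit quick, so this weaker requirement is convenient.

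If you want uniformity anyway, a CAT(0) comparison gives it. The ray from $y$ to $\xi$ stays within distance $d(y,x_0)$ of $c$. Hence the angle at $y$ between that ray and the segment $[y,c(s)]$ is $O(d(y,x_0)/s)$, uniformly for $y$ in compact sets.
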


\begin{prop}[{\cite[Lemma 5.3]{connell_positivity_2019}}]\label{prop:compare_HK}
	Let $\xi \in \partial_\infty \ucM$ and $x \in \ucM$.
	Then, for any $v,w \in T_x\ucM$ unitary vectors orthogonal to each other, it holds
	\[ (db_{\xi,x_0}(w))^2 \le C' \cdot \mathrm{Hess}\ b_{\xi,x_0}(v,v) \]
	for some $C' > 0$ independent of $x,v,w$ and $\xi$.
\end{prop}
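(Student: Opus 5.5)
Let $g=\nabla b_{\xi,x_0}(x)\in T_x\ucM$ denote the gradient of the Busemann function at $x$. Recall that $\|g\|=1$ and that $g$ points in the direction opposite to $\xi$. The plan is to reduce the statement to Proposition~\ref{prop:Hess}, using two elementary facts:
\begin{enumerate}
\item the Hessian of $b_{\xi,x_0}$ vanishes in the direction of $g$;
\item Bessel's inequality for the orthonormal pair $(v,w)$.
\end{enumerate}

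\emph{First step: the Hessian kills the radial direction.} Since $\|\nabla b_{\xi,x_0}\|^2\equiv 1$ on $\ucM$, differentiating this identity gives $2\,\mathrm{Hess}\ b_{\xi,x_0}(g,u)=0$ for every $u\in T_x\ucM$. Equivalently, the integral curves of $\nabla b_{\xi,x_0}$ are geodesics and $\nabla_g g=0$. The identity makes sense because Busemann functions are $C^2$.

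Now decompose $v=v^\perp+\langle v,g\rangle g$, where $v^\perp$ is orthogonal to $g$, i.e.\ tangent to the horosphere through $x$. By bilinearity and the vanishing above,
\[
\mathrm{Hess}\ b_{\xi,x_0}(v,v)=\mathrm{Hess}\ b_{\xi,x_0}(v^\perp,v^\perp).
\]
Proposition~\ref{prop:Hess} then yields
\[
\mathrm{Hess}\ b_{\xi,x_0}(v,v)\ge C\|v^\perp\|^2=C\bigl(1-\langle v,g\rangle^2\bigr).
\]

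\emph{Second step: control of $db(w)$.} We have $db_{\xi,x_0}(w)=\langle g,w\rangle$. Since $v,w$ are orthonormal and $\|g\|=1$, Bessel's inequality gives
\[
\langle g,v\rangle^2+\langle g,w\rangle^2\le 1,
\]
that is,
\[
(db_{\xi,x_0}(w))^2\le 1-\langle v,g\rangle^2=\|v^\perp\|^2 .
\]

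\emph{Conclusion.} Combining the two steps gives
\[
(db_{\xi,x_0}(w))^2\le C^{-1}\,\mathrm{Hess}\ b_{\xi,x_0}(v,v),
\]
so $C'=1/C$ works. This constant is independent of $x$, $v$, $w$ and $\xi$ because $C$ is.

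The only point that needs care is the first step: the vanishing of $\mathrm{Hess}\ b_{\xi,x_0}(g,\cdot)$, and the fact that Proposition~\ref{prop:Hess} applies to $v^\perp$, which is orthogonal to the direction of $\xi$. Everything else is linear algebra.
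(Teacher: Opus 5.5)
Your argument is correct. Since $b_{\xi,x_0}$ is $C^2$ with $\|\nabla b_{\xi,x_0}\|\equiv 1$, the Hessian annihilates $g$. Hence $\mathrm{Hess}\, b_{\xi,x_0}(v,v)=\mathrm{Hess}\, b_{\xi,x_0}(v^\perp,v^\perp)\ge C\|v^\perp\|^2$. Bessel's inequality for the orthonormal pair $(v,w)$ gives $\langle g,w\rangle^2\le 1-\langle g,v\rangle^2=\|v^\perp\|^2$. The degenerate case $v=\pm g$ needs no separate treatment, since then $db_{\xi,x_0}(w)=0$.

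The paper, however, contains no proof of this statement. It imports it, together with Proposition~\ref{prop:Hess}, from Connell--Wang. It describes both as simplified versions of results proved there in a more general, nonpositively curved setting, adapted to curvature bounded away from zero.

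Your route shows that, under the paper's standing assumptions, Proposition~\ref{prop:compare_HK} is a formal corollary of Proposition~\ref{prop:Hess}, with the explicit constant $C'=1/C$. The proof of Theorem~\ref{bounded:jac:thm} then rests on a single cited geometric input: the uniform positivity of the Hessian along horospheres. Everything else is the identity $\mathrm{Hess}\, b_{\xi,x_0}(\nabla b_{\xi,x_0},\cdot)=0$ and linear algebra. The paper's citation is shorter, but it obscures how little is actually needed in the pinched negatively curved case.
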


Let us prove that $K_a$ has at most one ``small'' eigenvalue.
Let $v_1$ and $v_2$ 
 be two orthogonal unitary eigenvectors of $K_a$.
 For every $\xi \in \partial_\infty \ucM$, denote by $v_1^\xi$ and $v_2^\xi$ the projections of $v_1$ and $v_2$ on the codimension-$1$ subspace of $T_{s(a)}$ corresponding to the horosphere centered at $\xi$.
Since $v_1$ and $v_2$ are orthogonal, it must hold either $\lVert v_1^\xi \rVert^2 \ge 1/2$ or $\lVert v_2^\xi \rVert^2 \ge 1/2$.
It follows from \Cref{prop:Hess} that either $\mathrm{Hess}\ b_{\xi,x_0}(v_1,v_1) \ge C/2$ or $\mathrm{Hess}\ b_{\xi,x_0}(v_2,v_2) \ge C/2$.
In any case, we have
$\mathrm{Hess}\ b_{\xi,x_0}(v_1,v_1) + \mathrm{Hess}\ b_{\xi,x_0}(v_2,v_2) \ge C/2$.
After integrating over $\xi \in \partial_\infty \ucM$, we obtain that $K_a(v_1,v_1) + K_a(v_2,v_2) \ge \frac{C}{2} \sum_{i=1}^{k+1}a_i^2 \ge \frac{C}{2(k+1)}$.
Therefore, at least one of $K_a(v_1,v_1)$ and $K_a(v_2,v_2)$ must be $\ge \frac{C}{4(k+1)}$.

Hence, all eigenvalues of $K_a$ are $\ge \frac{C}{4(k+1)}$, except possibly the smallest one, which we call $\lambda$. Let $v$ be a  unit eigenvector of $K_a$ relative to $\lambda$.
We now proceed to show that $d-1$ eigenvalues of $H_a$ are $\le C'\lambda$, where $C'$ is the constant appearing in \Cref{prop:compare_HK}.

Let $W \subset ds(V)$ be the codimension-$1$ subspace orthogonal to $v$.
By integrating over $\partial_\infty \ucM$ the inequality of \Cref{prop:compare_HK}, we obtain that $H(w,w) \le C'\lambda\lVert w\rVert^2$ for every $w \in W$.
Since $\dim(W) = d-1$, this readily implies that the $d-1$ smallest eigenvalues of $H_a$, i.e., all the eigenvalues except possibly one, are $\le C'\lambda$, as claimed.

Plugging this information into \eqref{eq:J_estimate}, we obtain a uniform upper bound for $\lvert \det(ds_{\restriction V}) \rvert$, which shows that the barycentric straightening has bounded Jacobians in degrees $\ge 3$. This concludes the proof of Theorem~\ref{bounded:jac:thm}.

\subsection{Unbounded 2-Jacobians in hyperbolic triangles.}\label{unbounded:sub}

We have just seen that the barycentric straightening has bounded Jacobians in degrees $\ge 3$.
In this subsection we construct examples which show that it has not bounded Jacobians in degree $2$, in general.
Indeed, already for triangles in the hyperbolic plane $\mathbb{H}^2$, the parametrization given by the barycenter method has arbitrarily large Jacobians.

Consider the Poincar\'e disc model $\mathbb{D} = \{x \in \mathbb{C} :  \norm{x} < 1 \}$ of the hyperbolic plane.
Then, for any $\xi \in \partial\mathbb{D}$, the Busemann function $b_{\xi}$ has gradient
\[ \mathrm{grad}\ b_{\xi}(x) = \frac{1-\norm{x}^2}{2} \cdot x + \frac{(1-\norm{x}^2)^2}{2} \cdot \frac{x-\xi}{ \norm{x-\xi}^2}. \]
Being invariant under isometries fixing the point $0 \in \mathbb{D}$, the Patterson-Sullivan measure $\nu_0$ is simply the normalized Lebesgue measure on $\partial \mathbb{D}= S^1$.
A standard calculation gives then
\[ \mathrm{grad}\ \mathcal{B}_{\nu_0}(x) = \int_{\partial\mathbb{D}} \mathrm{grad}\ b_{\xi}(x)\ d\nu_0(\xi) = \frac{1-\norm{x}^2}{2} \cdot x.\]
The latter is a vector in $T_x\mathbb{D}$ whose (hyperbolic) norm is $\norm{x} = \tanh(d_x/2)$, where $d_x$ is the hyperbolic distance between $x$ and $0$, and pointing away from the origin $0$.
Differentiating further, we obtain that $\nabla\mathrm{grad}\ \mathcal{B}_{\nu_0}(x)$, which is an endomorphism of $T_x\mathbb{D}$, is represented, with respect to the canonical basis of $\R^2 = T_x\mathbb{D}$, by the matrix
\[\begin{pmatrix}
	\frac{1-\norm{x}^2}{2} + x_2^2 & -x_1x_2 \\
	-x_1x_2 & \frac{1-\norm{x}^2}{2} + x_1^2
\end{pmatrix}.\]
As expected from the symmetries of the configuration, this endomorphism has eigenspaces in the ``radial'' and ``angular'' directions; the respective eigenvalues are:
\begin{itemize}
	\item $\frac{1 - \norm{x}^2}{2} = \frac{1 - \tanh^2(d_x/2)}{2}$ in the radial direction;
	\item $\frac{1 + \norm{x}^2}{2} = \frac{1 + \tanh^2(d_x/2)}{2}$ in the angular direction.
\end{itemize}
Note that in the limit $\norm{x} \to 1$, i.e., when $x$ is approaching a point at infinity $\xi \in \partial \mathbb{D}$, the endomorphisms converge to the orthogonal projection on $\langle \xi \rangle^\perp$.
 
Now that we have gained an understanding of $\mathcal{B}_{\nu_0}$, its gradient, and its Hessian (and also of the analogous quantities relative to Patterson-Sullivan measures $\nu_x$ based at any point $x$, by equivariance under isometries), we can use \eqref{eq:derived_relation} to compute the Jacobian of a straightened triangle.

\begin{figure}[ht]
	\centering
	\includegraphics[trim={2.5cm 2cm 2cm 2.5cm}, clip, width=.45\textwidth]{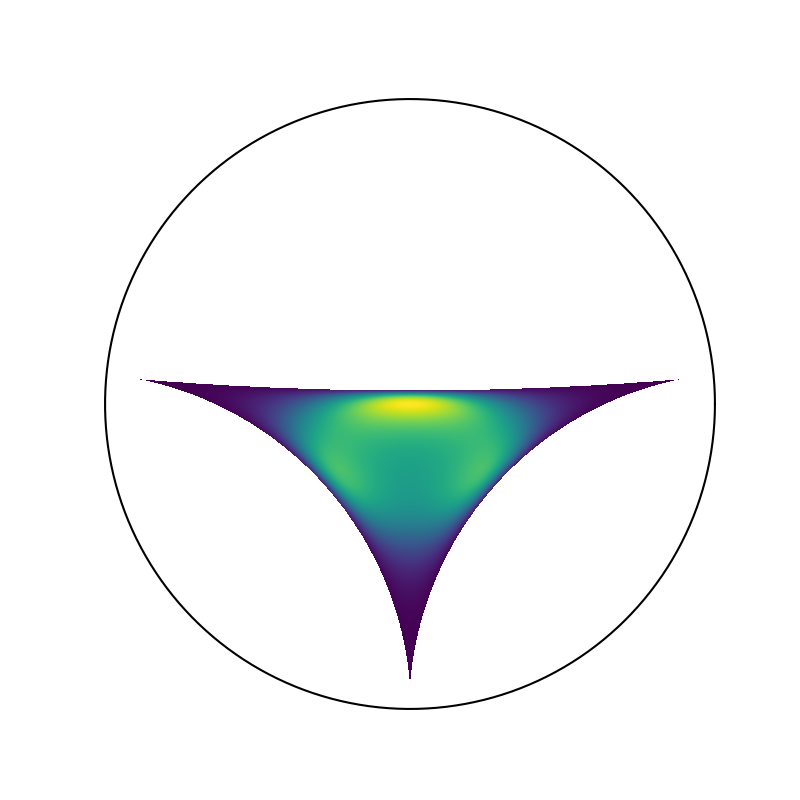}
	\hfill
	\includegraphics[trim={2.5cm 2cm 2cm 2.5cm}, clip, width=.45\textwidth]{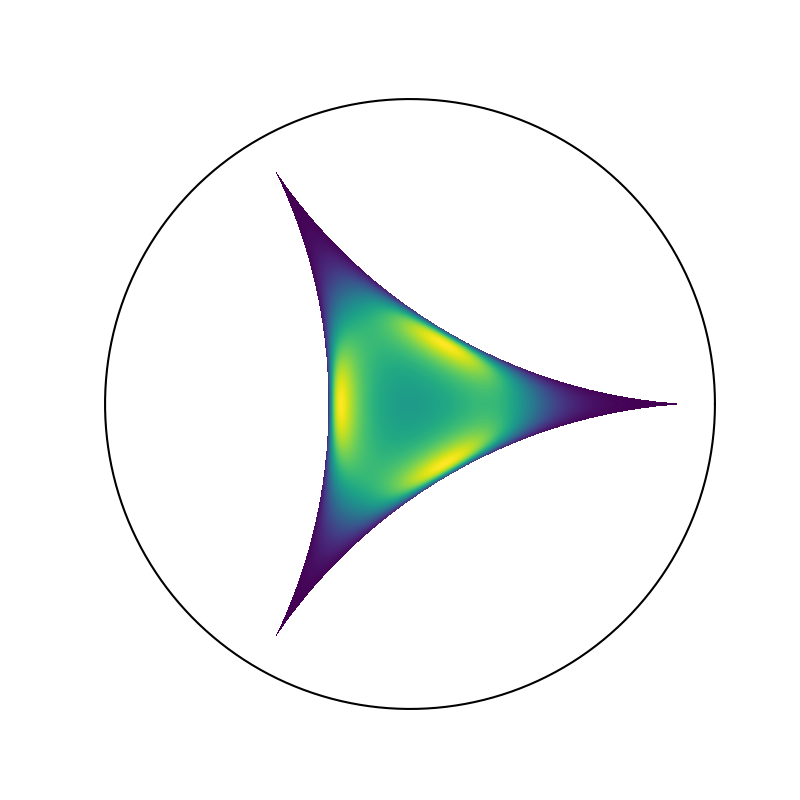}
	\caption{Two hyperbolic triangles in the Poincar\'e disk model. The color represents the value of the Jacobian of the barycentric parametrization: lighter colors correspond to a larger Jacobian (i.e., areas in which the parametrizing triangle is ``stretched'' more).}
	\label{fig:triangles}
\end{figure}

In particular, we pick one specific triangle obtained in this way (see the triangle on the left in \Cref{fig:triangles}): fix a (small) angle $\alpha \in (0,\pi/2)$; consider the geodesic rays that start from $0$ and end at $\xi_1 = e^{\iota\alpha}$, $\xi_2 = e^{\iota(\pi-\alpha)}$ and $\xi_3 = -\iota \in \partial \mathbb{D}$; fix then a (big) distance $D$, and take points $x_1,x_2,x_3$ on the three geodesic rays at distance $D$ from $0$.
Let $s\colon\Delta^2\to\mathbb{D}$ be the straight simplex with vertices $x_1,x_2,x_3$ obtained with the barycenter method. Recall that, for  hyperbolic spaces, the barycentric and the geodesic straightenings coincide up to reparametrization, hence the image of $s$ is just the convex hull of $x_1,x_2,x_3$; in particular, the origin
 $0 \in \mathbb{D}$ of the Poincar\'e model of $\mathbb{H}^2$ lies in the image of $s$ (close to the midpoint of one of the edges). We will compute the Jacobian of $s$
 at $a=s^{-1}(0)\in\Delta^{2}$. By definition, if $a = (a_1,a_2,a_3)$, then
 $$
 0 = s(a) =  \mathrm{Bar}(a_1^2\nu_{x_1} + a_2^2\nu_{x_2} + a_3^2\nu_{x_3})\ .
 $$
For convenience, define $T = \tanh(D/2)$, which is the Euclidean distance of $x_1,x_2,x_3$ from $0$.
From the first-order differential equation \eqref{eq:stationary} for $s$, which can be rewritten as
\[ a_1^2\ \mathrm{grad}\;\mathcal{B}_{\nu_{x_1}}(s(a)) + a_2^2\ \mathrm{grad}\;\mathcal{B}_{\nu_{x_2}}(s(a)) + a_3^2\ \mathrm{grad}\;\mathcal{B}_{\nu_{x_3}}(s(a)) = 0, \]
and  from the fact  that the vectors $\mathrm{grad}\ \mathcal{B}_{\nu_{x_i}}(0) \in T_0\mathbb{D}$ point towards $-\xi_i$, all with the same norm $T$, it follows that 
\begin{align*}a_1 = a_2 = \frac{1}{2 + \sqrt{2 \sin\alpha}}\ , & & a_3 = \frac{\sqrt{2 \sin\alpha}}{2 + \sqrt{2 \sin\alpha}} \ .\end{align*}

We also deduce from \eqref{eq:derived_relation} that the differential $ds\colon T_a\Delta^2 \to T_0\mathbb{D}$ satisfies the condition
\begin{equation}\label{eq:stima_ds} \left( \sum_i a_i^2\; \nabla \mathrm{grad}\;\mathcal{B}_{\nu_{x_i}}\right) (ds(u)) = -2 \left(\sum_i a_i u_i\; \mathrm{grad}\;\mathcal{B}_{\nu_{x_i}}\right), \end{equation}
which holds for every $u \in T_a\Delta^2$.
With respect to the canonical basis of $T_0\mathbb{D} = \mathbb{R}^2$, the quantities in the latter formula are matrices and vectors whose coefficients are continuous functions of $\alpha \in (0,\pi/2)$ and $T \in [0,1]$.
For simplicity, we consider the limit case $T = 1$, which corresponds to $D \to \infty$ and simplifies calculations because then each endomorphism $\nabla \mathrm{grad}\;\mathcal{B}_{\nu_{x_i}}$ is just the orthogonal projection onto $\langle \xi_i\rangle^\perp$.
Then, a computation shows that the endomorphism on the left-hand side of \eqref{eq:stima_ds} is represented by the matrix
\[\frac{2}{(2 + \sqrt{2\sin\alpha})^2}\begin{pmatrix}
	\sin^2\alpha & 0 \\
	0 &  \cos^2\alpha + \sin\alpha
\end{pmatrix}.\]

Consider $u = (1,-1,0) \in T_a\Delta^2$.
Then, the right-hand side of \eqref{eq:stima_ds} becomes (again, in the limit $T = 1$) equal to
$\frac{4 \cos\alpha}{2 + \sqrt{2\sin\alpha}} e_1$, which implies
\[ ds\begin{pmatrix}1\\-1\\0\end{pmatrix} = \frac{2 \cos\alpha\ (2 + \sqrt{2\sin\alpha})}{\sin^2\alpha} e_1 \in T_0\mathbb{D}\ .\]
Similarly, considering $u = (-1,-1,2)$, the right-hand side of \eqref{eq:stima_ds} evaluates as $\frac{-4(\sin\alpha + \sqrt{2\sin\alpha})}{2 + \sqrt{2\sin\alpha}} e_2$, which gives
\[ ds\begin{pmatrix}-1\\-1\\2\end{pmatrix} = \frac{-2 (\sin\alpha+\sqrt{2\sin\alpha})(2+\sqrt{2\sin\alpha})}{\cos^2\alpha + \sin\alpha} e_2 \in T_0\mathbb{D}\ .\]
Therefore, the determinant of $ds$ at the point $a$, with respect to the basis of $T_a\Delta^2$ given by the two vectors $u$ chosen above, and the canonical basis of $T_0\mathbb{D}=\mathbb{R}^2$, is
\[ \frac{-4 \cos\alpha\ (2 + \sqrt{2\sin\alpha})^2 }{\cos^2\alpha + \sin\alpha} \cdot \frac{\sin\alpha + \sqrt{2\sin\alpha}}{\sin^2\alpha}, \]
which diverges as $\alpha$ approaches $0$.

This means that, by choosing $\alpha$ sufficiently small and $D$ sufficiently large, we obtain straightened triangles in $\mathbb{H}^2$ with arbitrarily large Jacobians at certain points.

 \section{Cup product of De Rham classes}

  This section is entirely devoted to  the proof of Theorem~\ref{main:thm}. In fact, we will prove a slightly more general result, which can be applied to the barycentric straightening, but also possibly to other straightenings satisfying the specific key properties needed in the proof of the theorem.

  The first property we need is the boundedness of Jacobians in sufficiently high degrees, which we have established for the barycentric straightening in Subsection~\ref{bounded:jac:sub}.
  The second property is a compatibility condition between the straightening and the cross product, as we discuss below in Subsection~\ref{cross:sec} (for the case of the barycentric straightening, the compatibility condition is proved in \Cref{prop:bdd_hom}).
  Then, we prove in Subsection~\ref{proof:sec} the following result.
  
   \begin{thm}\label{general:thm}
   Let $M$ be a closed negatively curved manifold.
   Let $\str_\bullet$ be a straightening on $M$ having bounded Jacobians in degree
   $\geq d$, and satisfying the conclusion of \Cref{prop:bdd_hom}.
   For every $k\geq d$, let $\Psi_{\STR}^k\colon C\Omega^k(M)\to H^k_b(M)$ be the map described in Definition~\ref{def:psi}. Then,
   $$
\Psi_{\STR}^{p+q}(\omega_1\wedge\omega_2)=\Psi_{\STR}^p(\omega_1)\cup \Psi_{\STR}^q(\omega_2)
$$
for every $p\geq d$, $q\geq d$, and every $\omega_1\in C\Omega^p(M)$,  $\omega_2\in C\Omega^q(M)$.
   \end{thm}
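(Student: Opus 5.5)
We prove the statement by comparing both sides on the product $M\times M$ and pulling back along the diagonal. Recall that in bounded cohomology the cup product can be written as $\beta_1\cup\beta_2=\Delta^*(\beta_1\times\beta_2)$, where $\Delta\colon M\to M\times M$ is the diagonal and $\times$ is the (bounded) cross product. Similarly $\omega_1\wedge\omega_2=\Delta^*(\pi_1^*\omega_1\wedge\pi_2^*\omega_2)$.

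The plan has three steps.
\begin{enumerate}
\item Compute the cross product $I^p_{\STR}(\omega_1)\times I^q_{\STR}(\omega_2)$ on a simplex $\sigma=(\sigma_1,\sigma_2)\colon\Delta^{p+q}\to M\times M$. By the Eilenberg--Zilber formula it equals the sum, over $(p,q)$-shuffles $\tau$ with signs, of the products $\int_{\STR(\sigma_1\circ\tau_1)}\omega_1\cdot\int_{\STR(\sigma_2\circ\tau_2)}\omega_2$. By Fubini, each such term is the integral of $\pi_1^*\omega_1\wedge\pi_2^*\omega_2$ over the product map $\STR_p(\cdot)\times\STR_q(\cdot)\colon\Delta^p\times\Delta^q\to M\times M$.
\item Compare this product-of-straight-simplices chain with the chain obtained by first triangulating $\Delta^p\times\Delta^q$ and then straightening the $(p+q)$-simplices with $\str$ on the diagonal. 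This is the compatibility with cross product that \Cref{prop:bdd_hom} is meant to supply. I expect it to give a homotopy between the chain map $\sigma\mapsto \STR(\sigma_1)\times\STR(\sigma_2)$ (suitably subdivided) and $\STR\circ\Delta_*$, realized by a chain homotopy $T$ sending each simplex to finitely many $C^1$ simplices of uniformly bounded number.
\item Set $\Omega=\pi_1^*\omega_1\wedge\pi_2^*\omega_2$, which is a closed form. By Stokes, the difference between $\Delta^*(I(\omega_1)\times I(\omega_2))$ and $I^{p+q}(\omega_1\wedge\omega_2)$ is $\delta$ of the cochain $c\mapsto\int_{T(c)}\Omega$ (plus the term with $T\partial$). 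This holds at the cochain level as long as the homotopy commutes with faces, which follows as in property (3) of \Cref{defn:straightening}.
\end{enumerate}

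The whole point is that this primitive is a \emph{bounded} cochain. Its values are integrals of $\Omega$ over $(p+q)$-dimensional pieces built from the homotopy. I expect \Cref{prop:bdd_hom} to provide these pieces as maps whose differentials in the two factor directions are controlled: they are straight simplices of dimension $\ge p$ and $\ge q$ in each factor, possibly times an interval.

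Since $\Omega$ has bounded comass, the integrand is bounded by the product of the $p$-Jacobian in the first factor and the $q$-Jacobian in the second factor. Each of these is bounded by $L_p$ and $L_q$ respectively, because $p,q\ge d$ and the straightening has bounded Jacobians in degree $\ge d$, applied to simplices of possibly higher dimension $k\ge d$. This is exactly where the \emph{Jacobian} bound is needed rather than a volume bound: the pieces are higher-dimensional simplices restricted to lower-dimensional directions. The parameter domains have bounded volume, so the primitive is bounded and the two classes agree in $H^{p+q}_b(M)$.

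The main obstacle is steps 2–3: making precise that the homotopy from \Cref{prop:bdd_hom} has bounded geometry in each factor. The worry is the extra interval direction of the homotopy, whose contribution must also be controlled.

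My first attempt would be to avoid the interval entirely by exploiting the fact that the barycentric straightening depends only on vertices. Then $\str_p(\sigma_1|)\times\str_q(\sigma_2|)$ and $\str_{p+q}$ of the product simplex are both determined by vertices in $\widetilde M\times\widetilde M$. I would interpolate via straight simplices of dimension $p+q+1$ on the prism vertices, in each factor separately, so that every piece is a product of straight simplices of dimensions $\ge p$ and $\ge q$. The Jacobian bound in degrees $\ge d$ then bounds each piece uniformly.
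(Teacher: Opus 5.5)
There is a genuine gap in Steps 2--3. You ask \Cref{prop:bdd_hom} to connect $P\,\str^\otimes\theta D$ to $\str^2 D=D\,\str$, with Jacobian control in each factor. It does neither. It gives a homotopy $F$ between $\str^2\circ P$ and $P\circ\str^\otimes$ as maps $C_\bullet(M)\otimes C_\bullet(M)\to C_\bullet(M\times M)$, and only with bounded volumes.

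The missing link is the comparison of $\str^2 P\theta D$ with $\str^2 D$. This needs the Eilenberg--Zilber homotopy $H$, with $P\theta-\mathrm{Id}=\partial H\pm H\partial$ on $C_\bullet(M\times M)$ ($\ell^1$-bounded, from acyclic models), followed by $\str^2$. For the resulting primitive $c\mapsto\int_{\str^2 H D c}\Omega$ to be bounded, you need the cocycle $I(\omega_1,\omega_2)\colon\tau\mapsto\int_{\str^2_{p+q}(\tau)}\Omega$ to be bounded on \emph{arbitrary} simplices $\tau=(\tau_1,\tau_2)$ of $M\times M$. The simplices in $H(D\sigma)$ are not of product type. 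This is exactly your Jacobian estimate: $\str^2(\tau)=(\str_{p+q}\tau_1,\str_{p+q}\tau_2)$, and the $p$-Jacobian of the first factor and the $q$-Jacobian of the second are bounded because $p,q\ge d$. So you found the right estimate, but it belongs to $\str^2\circ H$, where there is no interval direction at all. In the paper this is \Cref{Hbounded} together with \Cref{Dprop}, after which $D^*[I]=\Psi^{p+q}(\omega_1\wedge\omega_2)$ because $\str^2 D=D\,\str$.

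Conversely, where $F$ enters, bounded volumes suffice, and your worry about the interval direction is moot. Since $\Omega$ has bounded comass and the Alexander--Whitney map $\theta$ is $\ell^1$-bounded, the cochain $c\mapsto\int_{F\theta D c}\Omega$ is bounded directly. You then still need $P\,\str^\otimes\theta D\sigma=\sum_{p'+q'=p+q}\str_{p'}(\sigma|_{p'})\times\str_{q'}({}_{q'}|\sigma)$ together with \Cref{dimsbagliate}, which kills every term except $(p',q')=(p,q)$.

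Three smaller points:
\begin{itemize}
\item Step 1 is misstated. The cochain cross product evaluated on a simplex of $M\times M$ is given by Alexander--Whitney (front $p$-face times back $q$-face), not by a shuffle sum. Indeed $\sigma_1\circ\tau_1$ does not typecheck when $\sigma_1$ is defined on $\Delta^{p+q}$.
\item Your fallback construction uses that the straightening depends only on vertices. The abstract hypotheses of the theorem do not provide this.
\item There is no evident prism whose two ends are the single simplex $D\,\str_{p+q}(\sigma)$ and the multi-term chain $P\,\str^\otimes\theta D\sigma$. That comparison is naturally the algebraic homotopy $H$ described above.
\end{itemize}
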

   
   Henceforth, we denote by $M$ a  closed negatively curved manifold of dimension $n$.
      We denote by $\pi_1,\pi_2\colon M\times M\to M$ the projections onto the first and  the second factor, respectively. 

Recall that there exists a \emph{cross product} (also called \emph{Eilenberg-Zilber map}, since it was first introduced in~\cite{EZ})
$$
\times \colon C_\bullet(M)\otimes C_\bullet(M)\to C_\bullet(M\times M)\, ,\quad c_1\otimes c_2\mapsto c_1\times c_2,
$$ 
which is a chain map with respect to the standard boundary operator on $C_\bullet(M)\otimes C_\bullet(M)$ -- see, e.g., \cite[Chapter VI, Section 1]{Bredon}.

The cross product is defined (see, e.g., \cite[Chapter IV, Section  16]{Bredon}) by fixing, for every $p,q \in \mathbb{N}$, a suitable chain $K_{p,q} \in C_{p+q}(\Delta^p\times\Delta^q)$, representing the fundamental cycle of $\Delta^p\times\Delta^q$ relative to its boundary $(\partial \Delta^{p}\times \Delta^{q})\cup (\Delta^{p}\times\partial \Delta^{q})$, so that for any $\sigma_1 \in \Simpl_p(M)$ and $\sigma_2 \in \Simpl_q(M)$ the cross product $\sigma_1 \times \sigma_2 \in C_{p+q}(M\times M)$ is the push forward of $K_{p,q}$ via the map $(x,y) \in \Delta^p\times\Delta^q \mapsto (\sigma_1(x),\sigma_2(y)) \in M\times M$.

A standard choice for $K_{p,q}$ (which we fix from now on) is the so called Eilenberg-Zilber chain, described in \cite{EM}, which is obtained by subdividing the prism $\Delta^p\times\Delta^q$ into affine simplices in a suitable explicit way.
A useful property of this specific triangulation is that it doesn't introduce new vertices, i.e., its vertices are all given by pairs of vertices of $\Delta^p$ and $\Delta^q$. Moreover,
with this choice for $K_{p,q}$, the cross product of two simplices of class $C^1$ is a linear combination of simplices of class $C^1$. 	
	By construction, the cross product is bounded (with respect to the obvious $\ell^1$-norms on $C_\bullet(M)\otimes C_\bullet(M)$ and on $C_\bullet(M\times M)$).

For later purposes we point out the following:

\begin{lemma}\label{dimsbagliate}
Let $\omega_1\in C\Omega^p(M)$, $\omega_2\in C\Omega^q(M)$, let $p'+q'=p+q$ and let
 $\sigma_1\colon \Delta^{p'}\to M$, $\sigma_2\colon \Delta^{q'}\to M$ be  simplices of class  $C^1$. If $p'=p$ and $q'=q$, then 
 $$
 \int_{\sigma_1 \times \sigma_2} \pi_1^*(\omega_1)\wedge \pi_2^*(\omega_2)=\left( \int_{\sigma_1} \omega_1\right)\cdot \left( \int_{\sigma_2} \omega_2\right)\ .
 $$
 Otherwise,
 $$
 \int_{\sigma_1 \times \sigma_2} \pi_1^*(\omega_1)\wedge \pi_2^*(\omega_2)=0\ .
 $$
 
 \end{lemma}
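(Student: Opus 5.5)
The plan is to reduce everything to an integral over the prism $\Delta^{p'}\times\Delta^{q'}$. By construction $\sigma_1\times\sigma_2$ is the push-forward of the Eilenberg--Zilber chain $K_{p',q'}$ under the $C^1$ map $\Phi=\sigma_1\times\sigma_2\colon\Delta^{p'}\times\Delta^{q'}\to M\times M$, $(x,y)\mapsto(\sigma_1(x),\sigma_2(y))$. Integrating a form over a chain is linear, and $\int_{\Phi_*\tau}\eta=\int_\tau\Phi^*\eta$ for each affine simplex $\tau$ of $K_{p',q'}$. Hence
$$
\int_{\sigma_1\times\sigma_2}\pi_1^*(\omega_1)\wedge\pi_2^*(\omega_2)=\int_{K_{p',q'}}\Phi^*\bigl(\pi_1^*\omega_1\wedge\pi_2^*\omega_2\bigr)=\int_{K_{p',q'}}\rho_1^*(\sigma_1^*\omega_1)\wedge\rho_2^*(\sigma_2^*\omega_2),
$$
where $\rho_1,\rho_2$ are the projections of the prism onto its two factors. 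The last equality uses $\pi_i\circ\Phi=\sigma_i\circ\rho_i$.

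The second step is to replace the chain $K_{p',q'}$ by the oriented manifold with corners $\Delta^{p'}\times\Delta^{q'}$. Since $K_{p',q'}$ is a sum of affine simplices triangulating the prism, with signs making it the relative fundamental class, the sum of the integrals over its simplices equals the integral over the prism with its product orientation. Only the top-dimensional pieces of the subdivision contribute, and lower-dimensional overlaps have measure zero. I expect this to be the main point needing care: one must check that the orientation convention of the Eilenberg--Zilber chain agrees with the product orientation, so that no extra sign $(-1)^{p'q'}$ appears. I would verify this against the standard fact that $K_{p,q}$ represents the fundamental class $[\Delta^p]\times[\Delta^q]$. If a sign convention does differ, it would have to be absorbed there, but I expect the standard choice to give $+1$.

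With that settled, the integrand is the form $\alpha\wedge\beta$ on a product, where $\alpha=\rho_1^*(\sigma_1^*\omega_1)$ and $\beta=\rho_2^*(\sigma_2^*\omega_2)$. Here $\sigma_1^*\omega_1$ is a continuous $p$-form on $\Delta^{p'}$ and $\sigma_2^*\omega_2$ is a continuous $q$-form on $\Delta^{q'}$.

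If $p'=p$ and $q'=q$, these are top-degree forms, $f(x)\,dx$ and $g(y)\,dy$. Their wedge product is $f(x)g(y)\,dx\wedge dy$, and Fubini gives the product $\int_{\sigma_1}\omega_1\cdot\int_{\sigma_2}\omega_2$.

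Otherwise, since $p'+q'=p+q$, we have either $p>p'$ or $q>q'$. In the first case $\sigma_1^*\omega_1$ is a form of degree exceeding $\dim\Delta^{p'}$, so it vanishes identically. In the second case $\sigma_2^*\omega_2=0$ for the same reason. Either way the integrand is zero, and so is the integral. The forms need only be continuous for these pullback and Fubini arguments, which is guaranteed by the $C^1$ hypothesis on $\sigma_1$ and $\sigma_2$.
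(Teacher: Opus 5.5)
Your proposal is correct and is essentially the paper's proof: you pass to $\int_{\Delta^{p'}\times\Delta^{q'}}\rho_1^*(\sigma_1^*\omega_1)\wedge\rho_2^*(\sigma_2^*\omega_2)$ using $\pi_i\circ\Phi=\sigma_i\circ\rho_i$, then conclude by degree reasons when $(p',q')\neq(p,q)$ and by Fubini otherwise. The orientation check you single out is left implicit in the paper (``readily follows from the definition of cross product''), and it does come out with sign $+1$: the identity-shuffle simplex of $K_{p',q'}$, with vertices $(v_0,w_0),\dots,(v_{p'},w_0),(v_{p'},w_1),\dots,(v_{p'},w_{q'})$, appears with coefficient $+1$ and induces the product orientation.
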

\begin{proof}
It readily follows from the definition of cross product that, if $k\colon \Delta^{p'}\times \Delta^{q'}\to M\times M$ is given by
$k(x,y)=(\sigma_1(x),\sigma_2(y))$, then 
$$
 \int_{\sigma_1\times \sigma_2} \pi_1^*(\omega_1)\wedge \pi_2^*(\omega_2)=\int_{\Delta^{p'}\times \Delta^{q'}} k^*(\pi_1^*(\omega_1)\wedge \pi_2^*(\omega_2)).
$$
Let $p_1\colon \Delta^{p'}\times \Delta^{q'}\to \Delta^{p'}$, $p_2\colon \Delta^{p'}\times \Delta^{q'}\to \Delta^{q'}$ be the projections. Then
$\pi_i\circ k=\sigma_i\circ p_i$ for $i=1,2$, hence
$$
\int_{\Delta^{p'}\times \Delta^{q'}} k^*(\pi_1^*(\omega_1)\wedge \pi_2^*(\omega_2))
 = \int_{\Delta^{p'}\times \Delta^{q'}} p_1^*(\sigma_1^*(\omega_1))\wedge p_2^*(\sigma_2^*(\omega_2))\ .
$$
Suppose now that $p'\neq p$. Then, either $p'<p$ or $q'<q$. In the first case we have $p_1^*(\sigma_1^*(\omega_1))=0$, while in the second we have
$p_2^*(\sigma_2^*(\omega_2))=0$. In any case, $p_1^*(\sigma_1^*(\omega_1))\wedge p_2^*(\sigma_2^*(\omega_2))=0$ and
$$
 \int_{\sigma_1\times \sigma_2} \pi_1^*(\omega_1)\wedge \pi_2^*(\omega_2)= \int_{\Delta^{p'}\times \Delta^{q'}} p_1^*(\sigma_1^*(\omega_1))\wedge p_2^*(\sigma_2^*(\omega_2))=0\ .
$$
This proves the lemma when $p'\neq p$ (hence, $q'\neq q$). On the other hand, If $p=p'$ and $q=q'$, 
Fubini's Theorem implies that
\begin{align*}
 \int_{\sigma_1\times \sigma_2} \pi_1^*(\omega_1)\wedge \pi_2^*(\omega_2) &= \int_{\Delta^{p}\times \Delta^{q}} p_1^*(\sigma_1^*(\omega_1))\wedge p_2^*(\sigma_2^*(\omega_2))\\ &=\left( \int_{\sigma_1} \omega_1\right)\cdot \left( \int_{\sigma_2} \omega_2\right)\ .\qedhere
\end{align*}
\end{proof}

Henceforth, we will denote by $P_\bullet  \colon C_\bullet(M)\otimes C_\bullet(M)\to C_\bullet(M\times M)$ the cross product.
The cross product admits a chain homotopy inverse 
$$
\theta_\bullet \colon C_\bullet(M\times M)\to C_\bullet(M)\otimes C_\bullet(M)
$$
such that
$$
P_k\circ \theta_k  - \mathrm{Id}_k=\partial_{k+1}\circ H_k \pm H_{k-1}\circ \partial_k\ ,
$$
where $H_\bullet \colon C_\bullet(M\times M)\to C_{\bullet +1}(M\times M)$ is a chain homotopy. Being constructed via acyclic models, 
both $\theta_\bullet$ and 
the chain homotopy $H_\bullet$ are bounded with respect to the $\ell^1$-norm. By dualizing the equality above (where we denote by $H^{\bullet +1}$ the transpose of $H_\bullet$, since
$H_\bullet$ is homogeneous of degree $+1$), we thus get the equality
\begin{equation}\label{dual:homotopy}
\theta^k\circ P^k  - \mathrm{Id}^k=H^{k+1}\circ \delta^k \pm \delta^{k-1}\circ H^{k}\ ,
\end{equation}
in which all the maps involved preserve the boundedness of cochains.

In fact, we will make use of a specific $\theta_\bullet$, the so-called \emph{Alexander-Whitney map}, which is defined as follows. 
For any singular $k$-simplex $\sigma\colon \Delta^k\to X$ and any $0\leq p\leq k$, we define the \emph{first} $p$-face of $\sigma$ by setting
$\sigma|_{p}=\sigma\circ i|_p\colon \Delta^p\to X$, where $i|_p\colon \Delta^p\to \Delta^k$ is the affine embedding sending the vertices of $\Delta^p$
onto the first $p+1$ vertices of $\Delta^k$, preserving their order. In the same way, we define the \emph{last} 
$p$-face of $\sigma$ by setting
${}_{p} |\sigma=\sigma\circ {}_p | i\colon \Delta^p\to X$, where $ {}_p | i\colon \Delta^p\to \Delta^k$ is the affine embedding sending the vertices of $\Delta^p$
onto the last $p+1$ vertices of $\Delta_k$, preserving their order.

By definition of product topology,
any singular $k$-simplex $\sigma\colon \Delta^k\to M\times M$ is of the form $\sigma(x)=(\sigma_1(x), \sigma_2(x))$, where $\sigma_1,\sigma_2\colon \Delta^k \to M$
are singular simplices. The Alexander-Whitney map is then the linear extension $\theta_{k}\colon C^{k}(M\times M)\to \oplus_{p+q=k} C_p(M)\otimes C_q(M)$
of the map 
$$
\sigma=(\sigma_1,\sigma_2) \ \longmapsto \ \sum_{p+q=k} (\sigma_1 |_p)\otimes ({}_q | \sigma_2)\ . 
$$
For later purposes, we also observe that $\str_p(\sigma|_p)=(\str_k(\sigma))|_p$ and $\str_p({}_p |\sigma)={}_p|\str_k(\sigma)$, i.e., the straightening operator commutes
with taking the first and the last $p$-face of a singular simplex.

Let $D\colon M\to M\times M$ be the diagonal map $D(x)=(x,x)$. If $\alpha_1\in C^p_b(M)$, $\alpha_2\in C^q_b(M)$ are 
bounded cocycles, then the cup product between the bounded classes they represent is, by definition, represented by the cochain $\alpha_1 \cup \alpha_2\in C^{p+q}_b(M)$ such that
$$
(\alpha_1 \cup \alpha_2) (c)=(\alpha_1\otimes \alpha_2) (\theta_{p+q}(D_\bullet(c)))\ .
$$
Therefore, if $\sigma\colon \Delta^{p+q}\to M$ is a singular simplex, then 
$$
(\alpha_1 \cup \alpha_2) (\sigma)=\alpha_1 (\sigma|_p) \cdot \alpha_2 ({}_q |\sigma)\ .
$$ 

\subsection{Straightening and cross product}\label{cross:sec}

Henceforth, we fix a straightening $\STR_\bullet$ on $M$.
The straightening on $M$ induces a straightening $\str_\bullet^2$ on $M\times M$: any singular simplex $\sigma\colon \Delta^k\to M\times M$
is equal to $\sigma=(\sigma_1,\sigma_2)$, where $\sigma_i\colon \Delta^k\to M$ is a singular simplex, and we then
set $\str^2_k(\sigma)=(\str_k(\sigma_1),\str_k(\sigma_2))$. 
Observe that for every singular $k$-simplex $\sigma$ with values in $M$ we have
$\str^2_k(D(\sigma))=D(\str_k(\sigma))$.

The straightening also induces a chain map
\[ \str_\bullet^\otimes \colon C_\bullet(M) \otimes C_\bullet(M) \to C_\bullet(M) \otimes C_\bullet(M) \]
sending any $\sigma_1\otimes\sigma_2$ to $\str(\sigma_1) \otimes \str(\sigma_2)$.
Consider the diagram of chain maps
\begin{equation}\label{eq:cross_str}\begin{tikzcd}
	C_\bullet(M) \otimes C_\bullet(M) \ar[r,"\str_\bullet^\otimes"]\ar[d,"P_\bullet"] & C_\bullet(M) \otimes C_\bullet(M) \ar[d,"P_\bullet"] \\
	C_\bullet(M\times M) \ar[r,"\str_\bullet^2"] & C_\bullet(M\times M)
\end{tikzcd}\end{equation}
where $P_\bullet$ is the cross product, which we assume is defined using the Eilenberg-Zilber chains.
In order to prove \Cref{general:thm}, we need that the diagram \eqref{eq:cross_str} commutes \emph{up to a homotopy with bounded volumes}.
\begin{defn}\label{def:volume}
	Let $\sum a_i \tau_i \in C_k(M\times M)$ be a chain, where $a_i \in \mathbb{R}$ and $\tau_i$ are distinct singular simplices of class  $C^1$, or, more generally, having enough regularity (e.g., Lipschitz maps) so that their $k$-dimensional volume is defined and finite.
	The \emph{volume} of such a chain is the sum $\sum \lvert a_i \rvert \vol_{k}(\tau_i)$.
\end{defn}
\begin{defn}\label{def:hom_bddvol}
	We say that a homotopy between the two chain maps $\str^2_\bullet\circ P_\bullet$ and $P_\bullet \circ \str^\otimes_\bullet$, given by linear maps
	\[F_k \colon \bigoplus_{p+q=k} C_p(M) \otimes C_q(M) \to C_{k+1}(M\times M), \]
	has bounded volumes if,
	for every $p,q \in \mathbb{N}$ there is a bound on the volumes of $F_{p+q}(\sigma_1\otimes\sigma_2)$, for $\sigma_1 \in \Simpl_p(M)$ and $\sigma_2 \in \Simpl_q(M)$, independent of $\sigma_1$ and $\sigma_2$.
	Here, we are assuming that the chains $F_{p+q}(\sigma_1\otimes\sigma_2)$ are regular enough as in \Cref{def:volume}.	
\end{defn}

\begin{prop}\label{prop:bdd_hom}
	Assume that $\str_\bullet$ is the barycentric straightening on $M$.
	Then, the diagram \eqref{eq:cross_str} commutes up to a homotopy with bounded volumes.
\end{prop}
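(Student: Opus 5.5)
The plan is to exploit the fact that the barycentric straightening of a simplex depends only on its vertices, and to show that the two chains $\str^2_\bullet(P_\bullet(\sigma_1\otimes\sigma_2))$ and $P_\bullet(\str^\otimes_\bullet(\sigma_1\otimes\sigma_2))$ are built from the \emph{same} pair of straight simplices $\str_p(\sigma_1)$, $\str_q(\sigma_2)$, merely precomposed with different reparametrizations. Concretely, the Eilenberg--Zilber chain $K_{p,q}$ is a signed sum of affine simplices $\Delta^{p+q}\to\Delta^p\times\Delta^q$ of the form $(\alpha,\beta)$. Here $\alpha\colon\Delta^{p+q}\to\Delta^p$ and $\beta\colon\Delta^{p+q}\to\Delta^q$ are the affine maps induced by monotone surjections on vertices; this is possible because no new vertices are introduced. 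Hence $P(\sigma_1\otimes\sigma_2)=\sum\pm(\sigma_1\circ\alpha,\sigma_2\circ\beta)$.

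First, let $x_1,\dots,x_{p+1}$ be the vertices of a lift of $\sigma_1$. Then $\sigma_1\circ\alpha$ has vertices $x_{\alpha(1)},\dots,x_{\alpha(p+q+1)}$, with repetitions. Therefore
\[
\str_{p+q}(\sigma_1\circ\alpha)(a)=\mathrm{Bar}\Bigl(\sum_j\Bigl(\sum_{\alpha(i)=j}a_i^2\Bigr)\nu_{x_j}\Bigr),\qquad
(\str_p(\sigma_1)\circ\alpha)(a)=\mathrm{Bar}\Bigl(\sum_j\Bigl(\sum_{\alpha(i)=j}a_i\Bigr)^2\nu_{x_j}\Bigr).
\]
The barycenter is invariant under rescaling the measure by a positive constant. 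Hence $\str_{p+q}(\sigma_1\circ\alpha)=\str_p(\sigma_1)\circ\psi_\alpha$, where $\psi_\alpha\colon\Delta^{p+q}\to\Delta^p$ is defined by
\[
\psi_\alpha(a)_j=\frac{\bigl(\sum_{\alpha(i)=j}a_i^2\bigr)^{1/2}}{\sum_l\bigl(\sum_{\alpha(i)=l}a_i^2\bigr)^{1/2}} .
\]
The analogous statement holds for $\beta$ and $\sigma_2$. The map $\psi_\alpha$ is Lipschitz, since the numerators are Euclidean norms and the denominator is bounded below on $\Delta^{p+q}$ by a positive constant. It is also natural with respect to faces, in the sense that $\psi_\alpha\circ\partial_i=\psi_{\alpha\circ\partial_i}$; this follows directly from the formula.

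Next, I define the homotopy using the convexity of $\Delta^p$ and $\Delta^q$. Set $\Phi_\alpha(a,t)=(1-t)\psi_\alpha(a)+t\,\alpha(a)$ and define $\Phi_\beta$ similarly. Let $F_{p+q}(\sigma_1\otimes\sigma_2)$ be the signed sum over the simplices of $K_{p,q}$ of the standard prism subdivision of $\Delta^{p+q}\times[0,1]$, pushed forward via
\[
(a,t)\mapsto\bigl(\str_p(\sigma_1)(\Phi_\alpha(a,t)),\ \str_q(\sigma_2)(\Phi_\beta(a,t))\bigr).
\]
The identity $\partial F+F\partial=\str^2\circ P-P\circ\str^\otimes$ follows from the usual prism-operator formula together with two facts: the face-naturality of $\psi$ and of the affine maps, and the fact that $\partial K_{p,q}$ cancels in the interior and restricts to the Eilenberg--Zilber chains of the faces. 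This is exactly the computation showing that $P$ is a chain map, now carried along the homotopy parameter.

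Finally, the volume bound is essentially free. Every simplex of $F_{p+q}(\sigma_1\otimes\sigma_2)$ is a Lipschitz map from a $(p+q+1)$-simplex. It factors as a Lipschitz map into $\Delta^p\times\Delta^q$, which has dimension $p+q$, followed by the $C^1$ map $\str_p(\sigma_1)\times\str_q(\sigma_2)$. Its $(p+q+1)$-dimensional volume is therefore $0$, so $F$ trivially has bounded volumes.

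I expect the main obstacle to be bookkeeping rather than geometry. One must verify carefully that the prism construction is compatible with the sign conventions and the degenerate simplices of the Eilenberg--Zilber chain. One must also check that the regularity required in Definition~\ref{def:volume} is met: the maps are only Lipschitz rather than $C^1$, because $\psi_\alpha$ fails to be smooth where a block of coordinates vanishes. If needed, I would handle this using the Lipschitz version of the volume, or by observing directly via the area formula that the volume vanishes.
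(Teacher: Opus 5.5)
Your proposal is correct and is essentially the paper's own proof. The paper's Lipschitz map $r_{p,q}\colon\Delta^p\times\Delta^q\to\Delta^p\times\Delta^q$ with $G_1=G_0\circ r_{p,q}$ is exactly your pair $(\psi_\alpha,\psi_\beta)$ patched over the simplices of $K_{p,q}$, obtained by the same grouping of coefficients and the invariance of $\mathrm{Bar}$ under rescaling; the homotopy is the same straight-line interpolation in $\Delta^p\times\Delta^q$, triangulated prism-wise, and its volume vanishes for the same reason, since it factors through the $(p+q)$-dimensional $\Delta^p\times\Delta^q$, so your explicit formula and face-compatibility check only spell out what the paper states more briefly.
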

\begin{proof}
	We actually prove that there is a homotopy with volumes all equal to $0$. Let $p,q \in \mathbb{N}$ and fix $\sigma_1 \in \Simpl_p(M)$, $\sigma_2 \in \Simpl_q(M)$.
	Let $K_{p,q}$ be the triangulation of $\Delta^p\times\Delta^q$ defining the Eilenberg-Zilber chain.
	
	Consider the following maps $G_0, G_1 \colon \Delta^p\times\Delta^q \to M\times M$.
	\begin{itemize}
		\item $G_0(x,y) = (\str_p(\sigma_1)(x), \str_q(\sigma_2)(y))$;
		\item For every $(p+q)$-simplex $\tau\colon\Delta^{p+q}\to\Delta^p\times\Delta^q$ appearing in the chain $K_{p,q}$ (recall that such a simplex is an affine embedding), let $G_\tau:\Delta^{p+q}\to M\times M$ be given by the composition of $\tau$ with the map $(x,y)\mapsto (\sigma_1(x),\sigma_2(y))$.
		Then, $G_1\colon\Delta^p\times\Delta^q\to M\times M$ is obtained by glueing back together the straightened simplices $\str^2_{p+q}(G_\tau)$.
		That is, if $w \in \Delta^p\times\Delta^q$ is in the image of $\tau$, then $G_1(w) = \str^2_{p+q}(G_\tau)(\tau^{-1}(w))$.
		Notice that this definition is well posed because of the compatibility of the straightening $\str_\bullet^2$ with the restriction to faces.
	\end{itemize}
	By construction, the chains \[P_{p+q}\str^\otimes_{p+q}(\sigma_1\otimes \sigma_2) = \str_p(\sigma_1) \times \str_q(\sigma_2)\]
	and
	\[\str^2_{p+q}P_{p+q}(\sigma_1\otimes\sigma_2) = \str^2_{p+q}(\sigma_1\times\sigma_2)\]
	are equal to the push-forwards of $K_{p,q}$ via the maps $G_0$ and $G_1$, respectively.
	To prove the proposition, we construct a homotopy $G\colon\Delta^p\times\Delta^q\times[0,1] \to M\times M$ between $G_0$ and $G_1$; then, the chain homotopy is obtained form $G$ by triangulating $K_{p,q} \times [0,1]$ in a standard way (e.g., by using again the Eilenberg-Zilber method on each sub-prism $\Delta^{p+q}\times [0,1]$).
	
	Let $w \in \Delta^p\times\Delta^q$, and suppose it is in the image of a certain $\tau$ as above.
	Thus, $w = \tau(a_1, \dots, a_{p+q+1})$ for some $(a_1,\dots,a_{p+q+1}) \in \Delta^{p+q}$.
	Write $\tau(e_i) = (e_{j^1_i}, e_{j^2_i})$ for $i \in \{1,\dots,p+q+1\}$.
	Then, by definition,
	$G_1(w)$ is the projection on $M \times M$ of the point
	\begin{equation}\label{eq:sub_bar} \left(\mathrm{Bar}\left(\sum_{i=1}^{p+q+1} a_i^2 \nu_{\widetilde\sigma_1(e_{j^1_i})}\right), \mathrm{Bar}\left(\sum_{i=1}^{p+q+1} a_i^2 \nu_{\widetilde\sigma_2(e_{j^2_i})}\right) \right) \in \widetilde{M}\times\widetilde{M},\end{equation}
	where $\widetilde\sigma_1$ and $\widetilde\sigma_2$ are lifts of the singular simplices $\sigma_1$ and $\sigma_2$ to the universal cover $\widetilde{M}$.
	Consider the first entry of the pair in \eqref{eq:sub_bar}. By grouping together the indices $i$ for which the $j_i^1$ are equal (and re-normalizing the coefficients), we can write it as
	\[ \mathrm{Bar}\left(\sum_{j=1}^{p+1} b_j^2 \nu_{\widetilde\sigma_1(e_j)}\right), \]
	for a certain $(b_1,\dots,b_{p+1}) \in \Delta^p$, which depends only on $(a_1,\dots,a_{p+q+1})$ and on the vertices of $\tau$.
	We can do the same for the second entry; thus, we obtain a map $\Delta^{p+q} \to \Delta^p \times \Delta^q$ which is smooth on the interior of every face of $\Delta^{p+q}$, and is Lipschitz.
	By patching together these maps on the various simplices of $K_{p,q}$ (we can do this because they are compatible on the shared faces), we obtain a map $r_{p,q}\colon\Delta^p\times\Delta^q \to \Delta^p\times\Delta^q$ such that $G_1 = G_0 \circ r_{p,q}$.
	The map $r_{p,q}$ is Lipschitz, and its restriction to the interior of each simplex of $K_{p,q}$ is smooth. Notice also that:
	\begin{itemize}
		\item The map $r_{p,q}$ depends only on $p,q \in \mathbb{N}$;
		\item The construction is compatible with the restriction to faces, i.e., the restrictions of $r_{p,q}$ to the prisms composing $\partial(\Delta^p\times\Delta^q)$, which are canonically identified with $\Delta^{p-1}\times\Delta^q$ or $\Delta^p\times \Delta^{q-1}$, coincide (up to these identifications) with $r_{p-1,q}$ and $r_{p,q-1}$.
	\end{itemize}
	We can now define the homotopy $G$ by setting
	\[ G(x,y,t) = G_0((1-t)\cdot(x,y) + t\cdot r_{p,q}(x,y)).\]
	It is clear that $G(x,y,0) = G_0(x,y)$ and $G(x,y,1) = G_1(x,y)$ for every $(x,y) \in \Delta^p\times\Delta^q$.
	Moreover, since both the family of maps $r_{p,q}$ and the straightening (used to define $G_0$) are compatible with the restrictions to faces, the same is true for the construction of $G$ (as a map depending on $\sigma_1$ and $\sigma_2$).
	Thus, by taking the push-forward via $G$ of a standard triangulation of $K_{p,q}\times [0,1]$, as already mentioned, we obtain a chain homotopy.
	
	To conclude the proof, it is enough to observe that the constructed chain homotopy has volume $0$, because the map $G\colon\Delta^p\times\Delta^q\times[0,1] \to M\times M$, by construction, factors through $\Delta^p\times\Delta^q$, which has dimension strictly smaller than $p+q+1$.
\end{proof}
\begin{rmk}
	The geodesic straightening also satisfies the conclusion of \Cref{prop:bdd_hom}; actually, the situation is much simpler in this case, because the diagram \eqref{eq:cross_str} commutes on the nose.
\end{rmk}

\subsection{Proof of Theorem~\ref{general:thm}}\label{proof:sec}

Henceforth, we assume that the straightening $\str_\bullet$ has bounded Jacobians in degrees
$\geq d$, and that the diagram \eqref{eq:cross_str} commutes up to a homotopy with bounded volumes.
Therefore, we have a chain homotopy $F_\bullet \colon C_\bullet(M)\otimes C_\bullet(M) \to C_\bullet(M\times M)$ as in \Cref{def:hom_bddvol}.
For ease of notation, we also denote simply by $\Psi^k$ the map $\Psi^k_\str$, $k\geq d$.

Let us now fix $\omega_1\in C\Omega^p(M)$, $\omega_2\in C\Omega^q(M)$ with $p\geq d$, $q\geq d$.
We define a cocycle $$I(\omega_1,\omega_2)\in C^{p+q}(M\times M)$$ by setting
$$
I(\omega_1,\omega_2)(\sigma)=\int_{\str^2_{p+q}(\sigma)} \pi_1^*(\omega_1)\wedge \pi_2^*(\omega_2)\ 
$$
for every singular simplex $\sigma\colon \Delta^{p+q}\to M\times M$.
The fact that $I(\omega_1,\omega_2)$ is a cocycle readily follows
from the fact that $\omega_1$ and $\omega_2$ are closed, hence $\pi_1^*(\omega_1)\wedge \pi_2^*(\omega_2)$ is also so.

The fact that the straightening we are considering has bounded Jacobians in degrees $\geq d$ plays a fundamental role in the following result, which represents a key step
in the proof of Theorem~\ref{general:thm}.

\begin{prop}\label{Hbounded}
Let $\omega_1\in C\Omega^p(M)$, $\omega_2\in C\Omega^q(M)$.
If $p\geq d$ and $q\geq d$, then the cocycle $I(\omega_1,\omega_2)$ is bounded.
\end{prop}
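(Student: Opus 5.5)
We have to bound $|I(\omega_1,\omega_2)(\sigma)|$ uniformly over all singular simplices $\sigma=(\sigma_1,\sigma_2)\colon\Delta^{p+q}\to M\times M$. By definition $\str^2_{p+q}(\sigma)=(s_1,s_2)$ with $s_i=\str_{p+q}(\sigma_i)\colon\Delta^{p+q}\to M$, so
$$
I(\omega_1,\omega_2)(\sigma)=\int_{\Delta^{p+q}} s_1^*(\omega_1)\wedge s_2^*(\omega_2)\ .
$$
Our plan is to control the integrand pointwise. Bounded Jacobians in degree $\ge d$ controls the $p$-Jacobian of $s_1$ and the $q$-Jacobian of $s_2$ at every point, even though the full differentials $ds_1, ds_2$ may be unbounded.

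Fix $x\in\Delta^{p+q}$ and an orthonormal basis $e_1,\dots,e_{p+q}$ of $T_x\Delta^{p+q}$. Expanding the wedge product, the value of $s_1^*\omega_1\wedge s_2^*\omega_2$ on $(e_1,\dots,e_{p+q})$ is a signed sum, over shuffles (subsets $A$ of size $p$ with complement $B$), of
$$
\omega_1\bigl(ds_1(e_{A})\bigr)\cdot\omega_2\bigl(ds_2(e_{B})\bigr),
$$
with $\binom{p+q}{p}$ terms. For each term we bound
$$
|\omega_1(ds_1 e_{a_1},\dots,ds_1 e_{a_p})|\le \|\omega_1\|_\infty\,\|ds_1 e_{a_1}\wedge\dots\wedge ds_1 e_{a_p}\|\ .
$$
Since $(e_{a_1},\dots,e_{a_p})$ is an orthonormal $p$-frame in $T_x\Delta^{p+q}$ and $p\ge d$, the latter norm is at most the norm of the $p$-Jacobian of $s_1=\str_{p+q}(\sigma_1)$. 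By the definition of bounded Jacobians in degree $d$ (applied with $k=p+q\ge p$), this is at most $L_p$. Here $\|\omega_1\|_\infty$ is the comass of $\omega_1$, which is finite by compactness of $M$.

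The same argument gives $|\omega_2(ds_2 e_B)|\le\|\omega_2\|_\infty L_q$. Hence the integrand is bounded pointwise by $\binom{p+q}{p}\|\omega_1\|_\infty\|\omega_2\|_\infty L_pL_q$. Integrating over $\Delta^{p+q}$, which has finite volume, yields a bound independent of $\sigma$, so $I(\omega_1,\omega_2)$ is bounded.

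The only delicate point is making sure the definition of bounded Jacobians is used correctly. It must give a bound on $p$-dimensional Jacobians of $(p+q)$-dimensional straight simplices, i.e.\ on $p$-frames inside a larger tangent space, and not merely on top-dimensional Jacobians. Since the definition quantifies over all $k\ge d$, this is exactly what it provides. This is also where the hypothesis $p,q\ge d$ enters: the geodesic straightening would fail here because individual directional derivatives are unbounded. Measurability and integrability are routine, since $s_i$ is $C^1$.
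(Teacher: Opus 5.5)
Your proposal is correct and follows essentially the same route as the paper: write $I(\omega_1,\omega_2)(\sigma)=\int_{\Delta^{p+q}} s_1^*\omega_1\wedge s_2^*\omega_2$, bound the integrand pointwise on orthonormal frames by $\binom{p+q}{p}$ times the comasses of $\omega_1,\omega_2$ times the $p$- and $q$-Jacobian bounds, and integrate over $\Delta^{p+q}$. The paper uses a single constant $K$ where you use $L_p, L_q$. Your explicit check that bounded Jacobians (in degree $p\ge d$, applied with $k=p+q$) controls $p$-frames inside the larger tangent space is exactly the point the paper uses implicitly.
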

\begin{proof}

Let $\sigma = (\sigma'_1,\sigma'_2)\colon \Delta^{p+q}\to M\times M$ be a singular simplex.
If we set $\sigma_1=\str_{p+q}(\sigma'_1)$, $\sigma_2=\str_{p+q}(\sigma_2')$, we then have
$\str^2_{p+q}(\sigma)=(\sigma_1,\sigma_2)$, hence
$$
I(\omega_1,\omega_2)(\sigma)=\int_{(\sigma_1,\sigma_2)} \pi_1^*(\omega_1)\wedge \pi_2^*(\omega_2)=\int_{\Delta^{p+q}} \sigma_1^*(\omega_1)\wedge \sigma_2^*(\omega_2)\ .
$$
Since $M$ is compact, both $\omega_1$ and $\omega_2$ are bounded, i.e., there exist $C_1,C_2\in\mathbb{R}$ such that
$|\omega_1(v_1\wedge\dots\wedge v_p)|\leq C_1 \norm{v_1\wedge\dots\wedge v_p}$ and $|\omega_2(w_1\wedge\dots\wedge w_q)|\leq C_2 \norm{w_1\wedge\dots\wedge w_q}$, whenever
$v_1,\dots,v_p$ and $w_1,\dots,w_q$ are frames at any point of $M$. Moreover, since we are assuming that our straightening has bounded Jacobians in degrees $\geq d$,
there exists $K\in\mathbb{R}$ such that, for every $x\in \Delta^{p+q}$ and every orthonormal frames
$e_1,\dots,e_p$ and $e'_1,\dots,e'_q$ at $x$, we have
$$
\|d(\sigma_1)_x (e_1\wedge\dots\wedge e_p)\|\leq K\, ,\qquad \|d(\sigma_2)_x (e'_1\wedge\dots\wedge e_q')\|\leq K\ .
$$
Putting together these inequalities, for every $x\in \Delta_{p+q}$ and every orthonormal frame
$e_1,\dots, e_{p+q}$ at $x$ we get
$$
\left| \left(\sigma_1^*(\omega_1)\wedge \sigma_2^*(\omega_2)\right)(e_1\wedge\dots\wedge e_{p+q})\right| \leq \frac{(p+q)!}{p!\, q!}\, C_1\, C_2\,  K^2\ .
$$
This shows that the form $\sigma_1^*(\omega_1)\wedge \sigma_2^*(\omega_2)$ is bounded independently of $\sigma_1$ and $\sigma_2$, which implies in turn
that $I(\omega_1,\omega_2)$ is bounded, as desired.
\end{proof}

\begin{prop}\label{Dprop}
We have 
$$
[I(\omega_1,\omega_2)]=[\theta^{p+q}(P^{p+q}(I(\omega_1,\omega_2)))]\qquad \text{in}\ H^{p+q}_b(M\times M)\ .
$$
\end{prop}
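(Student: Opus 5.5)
The plan is to apply the dual homotopy identity \eqref{dual:homotopy} to the bounded cocycle $\alpha=I(\omega_1,\omega_2)$, which is bounded by Proposition~\ref{Hbounded}. Evaluating \eqref{dual:homotopy} at $\alpha$ in degree $k=p+q$ gives
$$
\theta^{p+q}(P^{p+q}(\alpha))-\alpha = H^{p+q+1}(\delta^{p+q}\alpha)\pm \delta^{p+q-1}(H^{p+q}(\alpha)).
$$
Since $\alpha$ is a cocycle, the first term on the right vanishes. The second term is the coboundary of the cochain $H^{p+q}(\alpha)$.

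The key point is that this primitive must itself be a bounded cochain. This holds because $H_\bullet$ is bounded with respect to the $\ell^1$-norm. Indeed, for a simplex $\tau$ of degree $p+q-1$ we have
$$
|H^{p+q}(\alpha)(\tau)|=|\alpha(H_{p+q-1}(\tau))|\le \|\alpha\|_\infty\,\|H_{p+q-1}(\tau)\|_1\le C\|\alpha\|_\infty,
$$
with $C$ independent of $\tau$. The paper already notes that all maps in \eqref{dual:homotopy} preserve boundedness of cochains.

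The same reasoning shows that $\theta^{p+q}(P^{p+q}(\alpha))$ is a bounded cocycle, since $P$ and $\theta$ are bounded chain maps. Hence the difference $\theta^{p+q}(P^{p+q}(\alpha))-\alpha$ is the coboundary of a bounded cochain, and so
$$
[\alpha]=[\theta^{p+q}(P^{p+q}(\alpha))]\quad\text{in } H^{p+q}_b(M\times M).
$$

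There is no real obstacle, since boundedness of the cocycle $I(\omega_1,\omega_2)$ is exactly Proposition~\ref{Hbounded}. The only step needing care is the $\ell^1$-boundedness of $\theta_\bullet$ and $H_\bullet$, which the acyclic-models construction guarantees. For the Alexander--Whitney map this is explicit: $\theta_k$ sends each simplex to $k+1$ tensor products of simplices.
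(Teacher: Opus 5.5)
Your proposal is correct and follows essentially the same argument as the paper: you apply \eqref{dual:homotopy} to the bounded cocycle $I(\omega_1,\omega_2)$, use $\delta^{p+q}I(\omega_1,\omega_2)=0$ to kill the $H^{p+q+1}\circ\delta^{p+q}$ term, and use the $\ell^1$-boundedness of $H_\bullet$ to see that the remaining term is the coboundary of a bounded cochain. Your explicit estimate $|H^{p+q}(\alpha)(\tau)|\le C\|\alpha\|_\infty$ just spells out the paper's remark that all maps in \eqref{dual:homotopy} preserve boundedness of cochains.
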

\begin{proof}
Let $n=p+q$. By applying~\eqref{dual:homotopy}  to the cochain $I(\omega_1,\omega_2)$, and recalling that 
$\delta^{n}(I(\omega_1,\omega_2))=0$,
we get
$$
\theta^{n}(P^{n}(I(\omega_1,\omega_2))  -I(\omega_1,\omega_2)=\pm \delta^{n-1}(H^{n}(I(\omega_1,\omega_2)))\ .
$$
Since the homotopy $H^\bullet$ preserves the boundedness of cochains, and $I(\omega_1,\omega_2)$ is bounded by \Cref{Hbounded}, this readily implies the conclusion.
\end{proof}

The previous proposition implies in particular that, if $D^\bullet\colon H^{\bullet}_b(M\times M)\to H^{\bullet}_b(M)$ is the map
induced by the diagonal embedding $D\colon M\to M\times M$ in bounded cohomology, then
$$
D^{p+q}([I(\omega_1,\omega_2)])=D^{p+q}([\theta^{p+q}(P^{p+q}(I(\omega_1,\omega_2)))])\qquad \text{in}\ H^{p+q}_b(M)\ .
$$
Therefore, in order to prove that
$$
\Psi^{p+q}(\omega_1\wedge\omega_2)=\Psi^p(\omega_1)\cup \Psi^q(\omega_2)
$$
it will suffice to show that
\begin{equation}\label{seconda}
D^{p+q}([I(\omega_1,\omega_2)])=\Psi^{p+q}(\omega_1\wedge \omega_2)
\end{equation}
and
\begin{equation}\label{terza}
D^{p+q}([\theta^*(P^*(I(\omega_1,\omega_2)))]) =\Psi^{p}(\omega_1)\cup\Psi^q( \omega_2)
\end{equation}
in $H^{p+q}_b(M)$. 

The proof of~\eqref{seconda} is immediate: in fact,
for every singular simplex $\sigma\colon \Delta^{p+q}\to M$ we have
\begin{align*}
I(\omega_1,\omega_2)(D\circ\sigma)& =\int_{\str^2_{p+q}(D\circ\sigma)} \pi_1^*(\omega_1)\wedge \pi_2^*(\omega_2)\\ &=
\int_{D\circ \str_{p+q}(\sigma)} \pi_1^*(\omega_1)\wedge \pi_2^*(\omega_2)\\
&= \int_{\str_{p+q}(\sigma)} D^*( \pi_1^*(\omega_1)\wedge \pi_2^*(\omega_2))=\int_{\str_{p+q}(\sigma)} \omega_1\wedge \omega_2\ ,
\end{align*}
which gives exactly~\eqref{seconda}.

In order to prove~\eqref{terza}, let $\sigma\colon \Delta^{p+q}\to M$ be a singular simplex.
Denote by $D_\bullet \colon C_\bullet(M) \to C_\bullet(M\times M)$ the chain map induced by $D \colon M \to M\times M$.

We first point out that Lemma~\ref{dimsbagliate}  implies that
\begin{align}\label{eq:int_formula}\begin{split}
\sum_{p'+q'=p+q} &\int_{\str_{p'}(\sigma|_{p'})\times\str_{q'}( _{q'}\!|\sigma)} \pi_1^*(\omega_1)\wedge \pi_2^*(\omega_2)
\\ &=\left(\int_{\str_{p}(\sigma|_{p})} \omega_1\right) \cdot \left(\int_{\str_{q( _{q}\!|\sigma)}} \omega_2\right)\ .
\end{split}\end{align}
Now, we compute
\begin{align}\label{eq:compute}
D^{p+q}([\theta^*(P^*(I(\omega_1,\omega_2)))])(\sigma) = \int_{\str^2_{p+q}P_{p+q}\theta_{p+q}D_{p+q}\sigma} \pi_1^*(\omega_1)\wedge \pi_2^*(\omega_2).
\end{align}
The chain over which we are integrating the form $\pi_1^*(\omega_1)\wedge \pi_2^*(\omega_2)$ can now be written as
\begin{align*}
	\str^2_{p+q}P_{p+q}\theta_{p+q}D_{p+q}\sigma
	\ =\  &P_{p+q}\str^\otimes_{p+q}\theta_{p+q}D_{p+q}\sigma \\&+
	F_{p+q-1}\partial \theta_{p+q}D_{p+q}\sigma \\&+ \partial F_{p+q}\theta_{p+q}D_{p+q}\sigma.
\end{align*}

The last summand can be ignored, since it is a boundary and we are integrating a closed differential form on it;
the second summand can be rewritten as $F_{p+q-1}\theta_{p+q-1}D_{p+q-1}\partial\sigma$;
the first summand is equal to
\[ \sum_{p'+q'=p+q} \str_{p'}(\sigma|_{p'})\times\str_{q'}( _{q'}\!|\sigma). \]
Therefore, using \eqref{eq:int_formula}, the computation started in \eqref{eq:compute} continues as follows:
\begin{align*}
	\int_{\str^2_{p+q}P_{p+q}\theta_{p+q}D_{p+q}\sigma} \pi_1^*(\omega_1)\wedge \pi_2^*(\omega_2)
	= \left(\int_{\str_{p}(\sigma|_{p})} \omega_1\right) \cdot \left(\int_{\str_{q( _{q}\!|\sigma)}} \omega_2\right) \\
	+ \int_{F_{p+q-1}\theta_{p+q-1}D_{p+q-1}\partial\sigma} \pi_1^*(\omega_1)\wedge \pi_2^*(\omega_2).
\end{align*}

The last summand is the coboundary of a bounded cochain (the boundedness comes from the fact that $F$ has bounded volumes). 
This proves~\eqref{terza}, and concludes the proof Theorem~\ref{general:thm}, whence of Theorem~\ref{main:thm}.

\bibliography{math_papers}
\bibliographystyle{fram_alpha}
\end{document}